\newtheorem{thm}{Theorem}[section]
\newtheorem{cor}[thm]{Corollary}
\newtheorem{lem}[thm]{Lemma}
\newtheorem{prop}[thm]{Proposition}
\newcommand{\Email}[1]{{\sl E-mail:\/} {\rm\textsf{#1}}}
\newcommand{\R}{{\mathbb R}}
\newcommand{\be}[1]{\begin{equation}\label{#1}}
\newcommand{\ee}{\end{equation}}
\renewcommand{\(}{\left(}
\renewcommand{\)}{\right)}
\newcommand{\iRd}[1]{\int_{\R^d}{#1}\;dx}
\newcommand{\iR}[1]{\int_{\R}{#1}\;dx}
\newcommand{\iRy}[1]{\int_{\R}{#1}\;dy}
\newcommand{\il}[1]{\int_{\R}{#1}\;dx}
\newcommand{\ilcpt}[1]{\int_{-\frac\pi2}^{\frac\pi2}{#1}\;dx}
\newcommand{\nrmR}[2]{\|{#1}\|_{\mathrm L^{#2}(\R^d)}}
\newcommand{\nrml}[2]{\|{#1}\|_{\mathrm L^{#2}(\R)}}
\newcommand{\nrmlcpt}[2]{\|{#1}\|_{\mathrm L^{#2}(I)}}
\renewcommand{\H}{\mathrm H}
\renewcommand{\L}{{\mathcal L}\,}
\newcommand{\izp}[1]{\int_{-1}^1{#1}\;d\nu_p}
\newcommand{\izpp}[1]{\int_{\R}#1\;d\xi_p}
\newcommand{\scal}[2]{\left\langle{#1},{#2}\right\rangle}
\renewcommand{\aa}{\mathsf a}
\newcommand{\bb}{\mathsf b}
\newcommand{\ixmu}[2]{\int_\Omega{#1}\;d\mu_{#2}}
\newcommand{\nub}{d\mu_\bb}
\newcommand{\ixb}[1]{\ixmu{#1}{\bb}}
\newcommand{\Lab}{\mathcal L_{\aa\bb}\,}
\newcommand{\ir}[1]{\int_0^\infty{#1}\;r^{d-1}\,dr}
\newcommand{\newclaim}[1]{\par\medskip\noindent$\bullet$ \emph{#1}\par\smallskip}
\definecolor{darkgreen}{rgb}{0,0.4,0}
\newcommand{\darkgreen}[1]{\color{darkgreen}}
\newcommand{\z}{y}
\title[Interpolation, duality and flows]{One-dimensional Gagliardo-Nirenberg-Sobolev inequalities:\\Remarks on duality and flows}
\author[J.~Dolbeault, M.~Esteban, A.~Laptev \& M.~Loss]{Jean Dolbeault, Maria J.~Esteban, Ari Laptev and Michael Loss}
\begin{document}
\maketitle

\begin{abstract}
This paper is devoted to one-dimensional interpolation Gagliardo-Nirenberg-Sobolev inequalities. We study how various notions of duality, transport and monotonicity of functionals along flows defined by some nonlinear diffusion equations apply. 

We start by reducing the inequality to a much simpler dual variational problem using mass transportation theory. Our second main result is devoted to the construction of a Lyapunov functional associated with a nonlinear diffusion equation, that provides an alternative proof of the inequality. The key observation is that the inequality on the line is equivalent to Sobolev's inequality on the sphere, at least when the dimension is an integer, or to the critical interpolation inequality for the ultraspherical operator in the general case. The time derivative of the functional along the flow is itself very interesting. It explains the machinery of some rigidity estimates for nonlinear elliptic equations and shows how eigenvalues of a linearized problem enter in the computations. Notions of gradient flows are then discussed for various notions of distances.

Throughout this paper we shall deal with two classes of inequalities corresponding either to $p>2$ or to $1 < p<2$. The algebraic part in the computations is very similar in both cases, although the case $1< p<2$ is definitely less standard.
\end{abstract}

\section{Introduction}\label{Sec:Intro}

When studying sharp functional inequalities, and the corresponding best constants and optimizers, one has essentially three strategies at hand:

(a) To use a \emph{direct variational method} where one establishes the existence of optimizers. Then by analyzing the solutions of the corresponding \emph{Euler-Lagrange equations}, one can sometimes obtain explicit values for the optimizers and for the best constants.

(b) It is an old idea that \emph{flows} on function spaces and sharp functional inequalities are intimately related. Sharp inequalities are used to study qualitative and quantitative properties of flows such as decay rates of the solutions in certain norms. A famous example is Nash's inequality that provides exact decay rates for heat kernels \cite{MR0100158,MR1103113}. Conversely, flows can be used to prove sharp inequalities and identify the optimizers. A famous example is the derivation of the logarithmic Sobolev inequality by D.~Bakry and M.~Emery using the heat flow \cite{MR889476,MR808640,MR772092}. In that case, the flow relates an arbitrary initial datum to an optimizer of the inequality. The monotonicity of an appropriate functional along the flow provides \emph{a priori} estimates that, in case of critical points, can be related with older methods for proving \emph{rigidity results} in nonlinear elliptic equations. See~\cite{dolbeault:hal-00784887} and references therein for more details.

(c) Another way to look at these problems is to use the \emph{mass transportation theory}. One does not transport a function to an optimizer but instead one transports an arbitrary function to another one leading to a new variational problem. This dual variational problem can be easier to deal with than the original one. A well-known example of this method has been given by D.~Cordero-Erausquin, B.~Nazaret and C.~Villani in \cite{MR2032031} (also see \cite{MR2053603} for a simpler proof). With this approach, they obtained proofs of some of the Gagliardo-Nirenberg-Sobolev inequalities.

\medskip In this paper we will focus on another family of one-dimensional Gagliardo-Nirenberg-Sobolev inequalities that can be written as
\begin{eqnarray}
&&\nrml fp\le\mathsf C_{\rm GN}(p)\,\nrml{f'}2^\theta\,\nrml f2^{1-\theta}\quad\mbox{if}\quad p\in(2,\infty)\,,\label{GN1}\\
&&\nrml f2\le\mathsf C_{\rm GN}(p)\,\nrml{f'}2^\eta\,\nrml fp^{1-\eta}\quad\mbox{if}\quad p\in(1,2)\,,\label{GN2}
\end{eqnarray}
with $\theta=\frac{p-2}{2\,p}$ and $\eta=\frac{2-p}{2+p}$. See \cite{MR0109295,MR0102740,MR0109940} for the original papers. The threshold case corresponding to the limit as $p\to2$ is the logarithmic Sobolev inequality
\be{log-Sob}
\il{u^2\,\log\(\frac{u^2}{\nrml u2^2}\)}\le\frac 12\,\nrml u2^2\,\log\(\frac 2{\pi\,e}\,\frac{\nrml{u'}2^2}{\nrml u2^2}\)\,,
\ee
derived in \cite{MR0420249}. 

Among Gagliardo-Nirenberg-Sobolev inequalities, there are only a few cases for which best constants are explicit and optimal functions can be simply characterized. Let us mention Nash's inequality (see \cite{MR1230297}) and some interpolation inequalities on the sphere (see \cite{MR1134481,MR1230930}). A family for which such issues are known is
\[
\nrmR f{2q}\le\mathsf K_{\rm GN}(q,d)\,\nrmR{f'}2^\theta\,\nrmR f{q+1}^{1-\theta}\,,
\]
if $q\in(1,\infty)$ when $d=1$ or $2$, and $q\in(1,\frac d{d-2}]$ when $d\ge3$, and
\[
\nrmR f{q+1}\le\mathsf K_{\rm GN}(q,d)\,\nrmR{f'}2^\theta\,\nrmR f{2q}^{1-\theta}\,,
\]
if $q\in(0,1)$, with appropriate values of $\theta$. See \cite{Gunson91,MR1940370}. Again the logarithmic Sobolev inequality appears as the threshold case corresponding to the limit $q\to1$. These inequalities have two important properties:
\begin{enumerate}
\item[-] There is a nonlinear flow (a fast diffusion flow if $q>1$ and a porous media flow if $q<1$) which is associated to them. This flow can be considered as a gradient flow of an \emph{entropy} functional with respect to Wasserstein's distance, as was noticed in \cite{MR1842429}.
\item[-] A duality argument based on mass transportation methods allows to relate these inequalities with much simpler ones, as was observed in \cite{MR2032031,MR2053603}.
\end{enumerate}
The purpose of our paper is to study the analogue of these properties in case of \eqref{GN1} and~\eqref{GN2}. We will apply the methods described in (a), (b) and (c). Method (a) is rather standard (the proof is given in the appendix for completeness) while (b) and (c), although not extremely complicated, are less straightforward. As far as we know, neither (b) nor (c) have been applied yet to \eqref{GN1} and~\eqref{GN2}. Method (a) relies on compactness arguments, method (b) relies on \emph{a priori} estimates related to a global flow and method (c) requires the existence of a transport map.

\medskip Let us denote by $\mathrm L_2^1(\R)$ the space of the functions $\left\{G\in\mathrm L^1(\R)\,:\,\iRy{G\,|y|^2}<\infty\right\}$ and define
\be{Eqn:cp}
\mathsf c_p:=\left\{\begin{array}{l}
\(\tfrac{p+2}2\)^\frac{2\,(p-2)}{3\,p-2}\quad\mbox{if}\quad p\in(2,\infty)\vspace*{6pt}\cr
2^\frac{2-p}{4-p}\quad\mbox{if}\quad p\in(1,2)
\end{array}\right.
\ee
Based on mass transportation theory, method (c) allows to relate the minimization problem associated with \eqref{GN1} and~\eqref{GN2} to a dual variational problem as follows.
\begin{thm}\label{Thm:duality} The following inequalities hold: if $p\in(2,\infty)$, we have
\be{Ineq:duality1}
\sup_{G\in\mathrm L_2^1(\R)\setminus\{0\}}\frac{\iRy{G^\frac{p+2}{3\,p-2}}}{\(\iRy{G\,|y|^2}\)^\frac{p-2}{3\,p-2}\,\(\iRy{G}\)^\frac4{3\,p-2}}=\mathsf c_p\,\inf_{f\in\H^1(\R)\setminus\{0\}}\frac{\nrml {f'}2^\frac{2\,(p-2)}{3\,p-2}\,\nrml f2^\frac{2\,(p+2)}{3\,p-2}}{\nrml fp^\frac{4\,p}{3\,p-2}}\,,
\ee
and if $p\in(1,2)$, we obtain
\be{Ineq:duality2}
\sup_{G\in\mathrm L_2^1(\R)\setminus\{0\}}\frac{\iRy{G^\frac2{4-p}}}{\(\iRy{G\,|y|^2}\)^\frac{2-p}{2\,(4-p)}\,\(\iRy{G}\)^\frac{p+2}{2\,(4-p)}}=\mathsf c_p\,\inf_{f\in\H^1(\R)\setminus\{0\}}\,\frac{\nrml{f'}2^\frac{2-p}{4-p}\,\nrml fp^\frac{2\,p}{4-p}}{\nrml f2^\frac{p+2}{4-p}}\,.
\ee
\end{thm}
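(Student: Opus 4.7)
Plan: The natural approach is one-dimensional mass transportation, following the template of Cordero-Erausquin, Nazaret and Villani \cite{MR2032031}. I treat the case $p>2$; the case $1<p<2$ proceeds analogously with exponents adjusted.

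Given nonnegative, nontrivial $f\in\H^1(\R)$ and $G\in\mathrm L^1_2(\R)$, introduce the unique monotone non-decreasing transport map $T\colon\R\to\R$ sending $f^p\,dy/\nrml fp^p$ to $G\,dz/\int_\R G\,dz$. It satisfies $f^p(y)=c\,G(T(y))\,T'(y)$ a.e.\ with $c=\nrml fp^p/\int_\R G\,dz$. Setting $\lambda=\frac{p+2}{3p-2}$ and $\nu=1-\lambda=\frac{2(p-2)}{3p-2}$, the change of variables $z=T(y)$ gives
\[
\int_\R G^\lambda\,dz=c^{-\lambda}\int_\R f^{p\lambda}(T')^\nu\,dy,\qquad \int_\R G\,|z|^2\,dz=c^{-1}\int_\R f^p T^2\,dy,\qquad \int_\R G\,dz=c^{-1}\nrml fp^p.
\]
An elementary check of exponents shows that $c$ drops out of the ratio defining the left-hand side of \eqref{Ineq:duality1}, and the powers of $\nrml fp$ on both sides of the duality cancel. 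The duality is then equivalent to the sharp inequality
\[
\int_\R f^{p\lambda}(T')^\nu\,dy\;\le\;\mathsf c_p\,\(\int_\R f^p T^2\,dy\)^{\nu/2}\(\int_\R (f')^2\,dy\)^{\nu/2}\(\int_\R f^2\,dy\)^\lambda,
\]
which is scale-invariant in $f$ (with $T$ held fixed) and in $y$.

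To prove this key inequality I would combine H\"older's inequality, integration by parts, and Cauchy-Schwarz. A suitable H\"older splitting of the integrand $f^{p\lambda}(T')^\nu$ isolates the factor $\int f^p T'\,dy$; by integration by parts this equals $-p\int T f^{p-1}f'\,dy$, the boundary terms vanishing by integrability of $Tf^p$; Cauchy-Schwarz then yields $|\int T f^{p-1}f'\,dy|\le\(\int f^p T^2\,dy\)^{1/2}\(\int f^{p-2}(f')^2\,dy\)^{1/2}$; and one more H\"older interpolation bounds $\int f^{p-2}(f')^2\,dy$ in terms of $\int (f')^2\,dy$ and $\int f^2\,dy$. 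Chaining these estimates while tracking equality cases identifies the sharp constant $\mathsf c_p$ of \eqref{Eqn:cp}.

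For the reverse direction, I would verify that the bound is attained by an explicit extremal pair: the GN optimizer $f^*(y)=(1+Cy^2)^{-1/(p-2)}$ (shown sharp in the appendix by the direct variational method) and its transport image $G^*(z)=(a+bz^2)^{-(3p-2)/(2(p-2))}$ under $T^*(y)=A\sinh(\lambda y)$ turn every intermediate H\"older and Cauchy-Schwarz inequality above into an equality, so that both sides of \eqref{Ineq:duality1} coincide at $(f^*,G^*)$.

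The principal obstacle is arranging the H\"older splitting and the final interpolation so that the constant they produce is exactly $\mathsf c_p$; this is done by tracking equality cases through every step. For the case $1<p<2$ the same template applies, but the extremal profile is of Barenblatt type with compact support, so mild care is required in justifying the integration by parts at the boundary of the support.
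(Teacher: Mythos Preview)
Your overall architecture---monotone transport of $f^p$ onto $G$, then H\"older, integration by parts, and Cauchy--Schwarz---is exactly the paper's approach. The gap is in the specific H\"older split. You propose to isolate $\int f^p\,T'$; after integrating by parts and applying Cauchy--Schwarz this produces the factor $\bigl(\int f^{p-2}(f')^2\bigr)^{1/2}$, not $\bigl(\int (f')^2\bigr)^{1/2}$. Your remedy, ``one more H\"older interpolation bounds $\int f^{p-2}(f')^2$ in terms of $\int (f')^2$ and $\int f^2$'', does not work: no H\"older inequality does this (to land on $\int (f')^2$ you would need conjugate exponent $1$), and if you instead invoke the one-dimensional embedding $\|f\|_\infty\le C\,\|f'\|_2^{1/2}\|f\|_2^{1/2}$ you recover the correct homogeneities but lose the sharp constant, because the equality cases do not match. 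In fact, your H\"older split already gives the wrong complementary factor: isolating $(\int f^p T')^\nu$ forces the other factor to be $\bigl(\int f^{p(6-p)/(p+2)}\bigr)^\lambda$, which equals $(\int f^2)^\lambda$ only for $p=2$.

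The paper's fix is a different choice of exponent in the first H\"older step. Writing $F=f^p$, one splits $F^\lambda(T')^\nu$ so as to isolate $\int F^{\frac1p+\frac12}\,T'=\int f^{(p+2)/2}\,T'$ (not $\int f^p\,T'$); the complementary factor is then exactly $(\int F^{2/p})^\lambda=(\int f^2)^\lambda$. Integration by parts gives $\int f^{(p+2)/2}T'=-\tfrac{p+2}{2}\int f^{p/2}f'\,T$, and Cauchy--Schwarz applied to $f^{p/2}T\cdot f'$ yields directly $(\int f^pT^2)^{1/2}(\int (f')^2)^{1/2}$ with no leftover term. All three inequalities become equalities simultaneously on the extremal pair, which gives $\mathsf c_p$.

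A smaller point: the Gagliardo--Nirenberg extremal for \eqref{GN1} is $f_\star(x)=(\cosh x)^{-2/(p-2)}$, not $(1+Cy^2)^{-1/(p-2)}$; the latter profile belongs to the other (del Pino--Dolbeault) family. The dual extremal $G_\star(y)=(1+y^2)^{-(3p-2)/(2(p-2))}$ you wrote is correct. For $1<p<2$ the same exponent adjustment applies, and the integration by parts is justified because the optimal $f_*$ is compactly supported with sufficient vanishing at the endpoints.
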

All variational problems in Theorem~\ref{Thm:duality} have explicit extremal functions. The maximization problems is rather straightforward and yields an efficient method for computing $\mathsf C_{\rm GN}(p)$ in both of the cases corresponding to \eqref{Ineq:duality1} and \eqref{Ineq:duality2}. The proof of Theorem~\ref{Thm:duality} will be given in Sections~\ref{Sec:Duality} and~\ref{Sec:Optimality}.

\medskip Next we shall focus on the method (b). In this spirit, let us define on $\H^1(\R)$ the functional
\be{atlanta5}
\mathcal F[v]:=\nrml{v'}2^2+\frac4{(p-2)^2}\,\nrml v2^2-\mathsf C\,\nrml vp^2
\ee
where $\mathsf C$ is such that $\mathcal F[v_\star]=0$, with
\[
v_\star(x):=(\cosh x)^{-\frac2{p-2}}\,.
\]
Notice that $v_\star(x)=\big(1-z(x)^2\big)^\frac1{p-2}$ if $z(x):=\tanh x$, for any $x\in\R$. Next, consider the \emph{flow} associated with the nonlinear evolution equation
\be{Eqn:flowone}
v_t=\frac{v^{1-\frac p2}}{\sqrt{1-z^2}}\left[v''+\,\frac{2\,p}{p-2}\,z\,v'+\,\frac p2\,\frac{|v'|^2}v+\,\frac2{p-2}\,v\,\right]\,.
\ee

Then $\mathcal F$ is monotone non-increasing along the flow defined by \eqref{Eqn:flowone}.
\begin{thm}\label{thm:flow1} Let $p\in(2,\infty)$. Assume that $v_0\in\H^1(\R)$ is positive, such that $\nrml{v_0}p=\nrml{v_\star}p$ and the limits $\lim_{x\to\pm\infty}\frac{v_0(x)}{v_\star(x)}$ exist. If $v$ is a solution of \eqref{Eqn:flowone} with initial datum $v_0$, then we have
\[
\frac d{dt}\mathcal F[v(t)]\le 0\quad\mbox{and}\quad\lim_{t\to\infty}\mathcal F[v(t)]=0\,.
\]
Moreover, $\frac d{dt}\mathcal F[v(t)]=0$ if and only if, for some $x_0\in\R$, $v_0(x)=v_\star(x-x_0)$ for any $x\in\R$.\end{thm}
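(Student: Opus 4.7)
The plan is to apply a Bakry--Emery type strategy: compute the time derivative of $\mathcal F$ along the flow, integrate by parts to identify a non-positive integrand, and then analyze the equality case. Because the solution $v_\star$ is recognised as $(1-z^2)^{1/(p-2)}$ in the variable $z=\tanh x$, the computation is most naturally carried out after reducing everything to the ultraspherical setting on $(-1,1)$.

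The first technical point is the conservation of mass $\|v(t)\|_p$. Multiplying \eqref{Eqn:flowone} by $v^{p-1}$ and integrating, we get
\begin{equation*}
\tfrac1p\tfrac d{dt}\|v\|_p^p=\int\frac{v^{p/2}}{\sqrt{1-z^2}}\,\Bigl[v''+\tfrac{2p}{p-2}\,z\,v'+\tfrac{p}{2}\tfrac{(v')^2}{v}+\tfrac{2}{p-2}v\Bigr]dx.
\end{equation*}
A direct integration by parts using $z'=1-z^2$ and $\tfrac d{dx}(1-z^2)^{-1/2}=z(1-z^2)^{-1/2}$ shows that the four terms in the bracket cancel exactly, which is the first indication that the coefficients in \eqref{Eqn:flowone} are carefully tuned. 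Using this conservation, differentiation of the functional and an integration by parts give
\begin{equation*}
\tfrac d{dt}\mathcal F[v(t)]=-2\int\Bigl[v''-\tfrac{4}{(p-2)^2}v+\mu\,v^{p-1}\Bigr]\,v_t\,dx,\qquad \mu:=\mathsf C\,\|v_0\|_p^{2-p},
\end{equation*}
and one checks that the bracket vanishes identically for $v=v_\star$, confirming that $v_\star$ is a stationary solution and that $\mathcal F[v_\star]=0$ corresponds to the Euler--Lagrange equation on the $L^p$-sphere.

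Next I would substitute the flow expression for $v_t$ into this formula and perform the change of variables $z=\tanh x$, which transforms $\mathbb R$ into $(-1,1)$ with $dx=dz/(1-z^2)$. With this change, the bracket of \eqref{Eqn:flowone} becomes a combination of the ultraspherical operator $\mathcal L_d=(1-z^2)\partial_z^2-dz\,\partial_z$ applied to $v$ and a lower-order zeroth-order term, for the value $d=2p/(p-2)$. This is precisely the critical ultraspherical dimension associated with the exponent $p$, and the whole computation falls into the standard carré du champ framework used to prove the sphere interpolation/Sobolev inequality. The resulting identity has the form
\begin{equation*}
\tfrac d{dt}\mathcal F[v(t)]=-2\int R[v]\,dx,
\end{equation*}
where $R[v]$ is a sum of squares built from $v$ and its first two derivatives (algebraically a Cauchy--Schwarz remainder combined with a curvature-dimension type term), and in particular is pointwise non-negative, so $\mathcal F$ is non-increasing.

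The equality case comes from inspecting $R[v]\equiv 0$: this forces $v$ to satisfy a rigid system of first-order relations which integrate to $v(x)=\lambda\,v_\star(x-x_0)$, and the $L^p$ normalisation together with positivity fix $\lambda=1$. For the convergence $\mathcal F[v(t)]\to 0$, monotonicity plus the lower bound $\mathcal F\ge 0$ provided by \eqref{GN1} yield a limit $\mathcal F_\infty\ge0$, and integrability of $-\tfrac d{dt}\mathcal F$ combined with standard parabolic regularity ensures that the trajectory is precompact modulo translations. The hypothesis that $v_0/v_\star$ has limits at $\pm\infty$ is used precisely to rule out runaway translations, so that any $\omega$-limit point is a genuine translate $v_\star(\cdot-x_0)$, on which $\mathcal F$ vanishes; hence $\mathcal F_\infty=0$. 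The main obstacle is the algebraic identification of $R[v]$ as a non-negative quantity: the specific exponents $v^{1-p/2}$, the weight $1/\sqrt{1-z^2}$, and the coefficients $\tfrac{2p}{p-2}$, $\tfrac{p}{2}$, $\tfrac{2}{p-2}$ in \eqref{Eqn:flowone} must conspire exactly so that the cross terms produced by the two integrations by parts reassemble into a square, which is what the ultraspherical reformulation makes transparent.
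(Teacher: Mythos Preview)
Your outline matches the paper's strategy: pass to the ultraspherical setting, compute the dissipation of $\mathcal F$ along the flow, and recognise it as a square. Two points, however, need sharpening.

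First, the change $z=\tanh x$ alone does \emph{not} make the ultraspherical operator act on $v$. The paper writes $v(x)=v_\star(x)\,f(z(x))$; it is on $f$ that $\mathcal L=(1-z^2)\,\partial_z^2-\tfrac{2p}{p-2}\,z\,\partial_z$ acts, and \eqref{Eqn:flowone} becomes $f_t=f^{1-p/2}\bigl[\mathcal Lf+\tfrac p2\,(1-z^2)\,|f'|^2/f\bigr]$ with respect to the probability measure $d\nu_p$. Without peeling off the factor $v_\star$ you will not see the clean structure you are invoking.

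Second, you assert that the carr\'e du champ machinery produces a non-negative $R$, but you do not identify it, and this identification is the crux. The paper obtains it via a further substitution $f=u^\beta$ with $\beta=4/(6-p)$, which renders the dissipation two-homogeneous in $u$; the two integration-by-parts identities of Proposition~\ref{Prop:2id} then complete the cross terms into a \emph{single} square $\bigl|u''-\tfrac{p+2}{6-p}\,|u'|^2/u\bigr|^2$. Undoing both substitutions yields the explicit formula
\[
\frac d{dt}\mathcal F[v(t)]=-\,2\int_{\R}\frac1{(1-z^2)^2}\Bigl(\frac v{v_\star}\Bigr)^{1-\frac p2}\Bigl|v''-\frac p2\,\frac{|v'|^2}v+\frac2{p-2}\,v\Bigr|^2\,dx\,,
\]
so $R$ is in fact one square, not several. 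The equality case is then immediate: the bracket vanishes iff $w:=v^{1-p/2}$ satisfies $w''=w$, hence $w=C\cosh(x-x_0)$ and $v=v_\star(\cdot-x_0)$ after normalisation. Your treatment of mass conservation and of the large-time limit is fine, and in fact more detailed than the paper, which leaves that part to the reader.
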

This result deserves a few comments. First of all by proper scaling it yields a proof of~\eqref{Ineq:duality1}. Further it shows that up to translations, multiplication by a constant and scalings, the function~$v_\star$ is the unique optimal function for \eqref{GN1}, and again allows to compute $\mathsf C_{\rm GN}(p)$. Indeed we have shown that
\begin{equation} \label{atlanta1}
\nrml{v_0'}2^2+\frac4{(p-2)^2}\,\nrml {v_0} 2^2-\mathsf C\,\nrml {v_0}p^2 \ge\lim_{t\to\infty}\mathcal F[v(t)]=0
\end{equation}
for an arbitrary function $v_0$ satisfying the assumptions of Theorem~\ref{thm:flow1}, but the technical conditions on $v_0$ can easily be removed at the level of the inequality by a density argument.

At first sight, \eqref{Eqn:flowone} may look complicated. The interpolation inequality \eqref{Ineq:duality1} turns out to be equivalent to Sobolev's inequality on the $d$-dimensional sphere if $d=\frac{2\,p}{p-2}$ is an integer, and to the critical interpolation inequality for the ultraspherical operator in the general case. These considerations will be detailed in Section~\ref{Sec:monotonicity}.

For completion, let us mention that a \emph{rigidity result} is associated with Theorem~\ref{thm:flow1}. A statement will be given in Section~\ref{Sec:Rigidity}. Although the rigidity result can be obtained directly, the flow approach is simpler to state, at least in the original variables, and provides a clear scheme.

\medskip In the case $1<p<2$, a result similar to Theorem \ref{thm:flow1} can be proved. In that case the global attractor is defined as
\[
v_*(x)=(\cos x)^\frac 2{2-p}\quad\mbox{if}\;x\in I:=\(-\tfrac\pi2,\tfrac\pi2\)\quad\mbox{and}\quad v_*(x)=0\quad\mbox{otherwise}\,.
\]
Then the functional
\be{atlanta6}
\mathcal F[v]:=\nrmlcpt{v'}2^2+\,\mathsf C\,\nrmlcpt vp^2-\,\frac4{(2-p)^2}\,\nrmlcpt v2^2\,,
\ee
defined for any $v\in\H^1(\R)\cap\mathrm L^p(\R)$, where again $\mathsf C$ is chosen such that $\mathcal F[v_\star]=0$, is non-increasing along the flow defined by
\be{Eqn:flowtwo}
v_t=\frac{v^{1-\frac p2}}{\sqrt{1+\z^2}}\left[v''+\,\frac{2\,p}{2-p}\,\z\,v'+\,\frac p2\,\frac{|v'|^2}v+\,\frac2{2-p}\,v\,\right]\,,
\ee
where $\z(x)=\tan x$. More precisely, we have a result that goes exactly as Theorem~\ref{thm:flow1} and, up to necessary adaptations due to the fact that optimal functions are compactly supported, there are similar consequences that we will not list here.
\begin{thm}\label{thm:flow2} Let $p\in(1,2)$. Assume that $v_0\in\H^1(I)$ is positive, such that $\nrmlcpt{v_0}p=\nrmlcpt{v_\star}p$ and the limits $\lim_{x\to\pm\frac\pi2}\frac{v_0(x)}{v_\star(x)}$ exist. If $v$ is a solution of \eqref{Eqn:flowtwo} with initial datum $v_0$, then we have
\[
\frac d{dt}\mathcal F[v(t)]\le 0\quad\mbox{and}\quad\lim_{t\to\infty}\mathcal F[v(t)]=0\,.
\]
Moreover, $\frac d{dt}\mathcal F[v(t)]=0$ if and only if, for some $x_0\in\R$, $v_0(x)=v_\star(x-x_0)$ for any $x\in I$.\end{thm}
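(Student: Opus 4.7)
The plan is to adapt the argument of Theorem~\ref{thm:flow1} to the compactly supported regime $1<p<2$. The structural observation is that the change of variables $x \in I \mapsto \xi := \sin x \in (-1,1)$ turns \eqref{GN2} into a critical interpolation inequality for the ultraspherical operator on $(-1,1)$ with a measure of the form $d\nu_\alpha = c_\alpha(1-\xi^2)^{\alpha}\,d\xi$ for an appropriate $\alpha=\alpha(p)$, exactly as in the non-compact setting of Theorem~\ref{thm:flow1} (now with the half-integer ``dimension'' $d=\tfrac{2p}{2-p}$). In these variables $v_\star$ becomes the extremal profile and \eqref{Eqn:flowtwo} is the corresponding nonlinear fast-diffusion-type evolution for which the Bakry--Emery \emph{carré du champ} machinery applies. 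A preliminary step is to check that the $\mathrm L^p$-normalization is preserved, $\tfrac d{dt}\nrmlcpt{v(t)}{p}^p = 0$, which follows from a direct computation using \eqref{Eqn:flowtwo} and one integration by parts, provided the endpoint contributions at $x=\pm\tfrac\pi2$ vanish; this is where the hypothesis on the existence of $\lim_{x\to\pm\pi/2} v_0/v_\star$ enters, as a property that propagates in time by a comparison argument.

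The heart of the proof is the computation of $\tfrac d{dt}\mathcal F[v(t)]$. Writing $v$ in terms of a suitable power (the nonlinear change of unknown adapted to the flow) and performing two successive integrations by parts, the derivative can be arranged into an integrated sum of squares of the schematic form
\[
\frac d{dt}\mathcal F[v(t)] = -\,2\int_I \bigl[\,A(x)\,\mathcal A[v]^2 + B(x)\,\mathcal B[v]^2\,\bigr]\,dx \;\le\;0,
\]
where $\mathcal A,\mathcal B$ are explicit first- and second-order differential expressions in $v$ and the weights $A,B$ are non-negative throughout $(1,2)$. This is the same algebraic identity as in the $p>2$ case flagged in the introduction, with the substitutions $(p-2)\leftrightarrow(2-p)$ and $\cosh\leftrightarrow\cos$ accounting for all sign changes. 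Equality forces the sharp term $\mathcal B[v]\equiv 0$, whose solutions inside the constraint set $\{\nrmlcpt{v}{p} = \nrmlcpt{v_\star}{p}\}$ are exactly the translates $v_\star(\cdot - x_0)$, yielding the rigidity statement.

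The genuinely new difficulty compared with Theorem~\ref{thm:flow1} is the treatment of boundary terms at $x = \pm\tfrac\pi2$: since $v_\star$ vanishes there and the coefficients of \eqref{Eqn:flowtwo} involve weights that become singular at the endpoints, one has to show that every endpoint contribution from the successive integrations by parts actually cancels. The hypothesis that $\lim_{x\to\pm\pi/2} v_0(x)/v_\star(x)$ exists is tailored to this purpose: propagated along the flow, it yields enough pointwise control on $v(t)/v_\star$ at the endpoints to discard the boundary terms, and this is the step I expect to require the most care. Once monotonicity is secured, the limit $\mathcal F[v(t)]\to 0$ follows from a standard Lyapunov/$\omega$-limit argument: the functional is bounded below by \eqref{GN2}, the orbit is relatively compact in $\H^1(I)$ by the dissipation estimate and the preserved $\mathrm L^p$-constraint, and the equality case forces any limit point to be a translate of $v_\star$, on which $\mathcal F$ vanishes.
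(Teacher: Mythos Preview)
Your overall strategy matches the paper's: change variables to a weighted problem, run a nonlinear flow, and show the dissipation is non-positive. However, the specific change of variables you propose is not the one the paper uses and does not yield the ultraspherical structure you describe. The paper sets $y=\tan x$, which maps $I$ onto all of $\mathbb{R}$ (not onto $(-1,1)$), and writes $v(x)=v_*(x)\,f(y(x))$ with $v_*=(1+y^2)^{-1/(2-p)}$. The resulting operator is $\mathcal{L}f=(1+y^2)\,f''-\tfrac{2p}{2-p}\,y\,f'$ on $\mathrm{L}^2(\mathbb{R},d\xi_p)$ with $d\xi_p\propto(1+y^2)^{-2/(2-p)}\,dy$: this is an \emph{analogue} of the ultraspherical operator in which $\xi(y)=1+y^2$ plays the role that $\nu(z)=1-z^2$ played for $p>2$, not the ultraspherical operator itself on $(-1,1)$. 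The two key integration-by-parts identities still hold in this setting (they are the case $\Omega=\mathbb{R}$, $\nu=1+y^2$, $\mathsf a=1$, $\mathsf b=-\tfrac{2}{2-p}$ of Proposition~\ref{Prop:2id}), and with $\beta=\tfrac{4}{6-p}$ the dissipation comes out as a \emph{single} perfect square,
\[
\frac{d}{dt}\mathsf{F}[f(t)]=-\,2\int_{\mathbb{R}} f^{\,1-\frac p2}\,\Bigl|f''-\tfrac p2\,\tfrac{|f'|^2}{f}\Bigr|^2(1+y^2)^2\,d\xi_p\,,
\]
not a sum of two weighted squares as in your sketch; the equality case $f''=\tfrac p2\,|f'|^2/f$ is then read off directly. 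A practical consequence of mapping to $\mathbb{R}$ rather than to a finite interval is that the endpoint issues you single out as the main new difficulty become decay conditions at $y=\pm\infty$: the hypothesis on $\lim_{x\to\pm\pi/2} v_0/v_\star$ translates into finiteness of $\lim_{y\to\pm\infty} f_0(y)$, and together with the polynomial decay of $d\xi_p$ this is what justifies the integrations by parts.
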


As a consequence of the above theorem, for every $v_0$ satisfying the assumptions stated in the above theorem, we have that
\begin{equation}\label{atlanta2}
\nrmlcpt{v_0'}2^2+\,\mathsf C\,\nrmlcpt {v_0}p^2-\,\frac4{(2-p)^2}\,\nrmlcpt {v_0} 2^2 \ge \lim_{t\to\infty}\mathcal F[v(t)] = 0 \ .
\end{equation}

This paper is organized as follows. Theorem \ref{Thm:duality} is proved in Section \ref{Sec:Duality} by implementing the strategy defined in (c). Optimality is checked in Section~\ref{Sec:Optimality}.

In Section \ref{Sec:monotonicity} we prove inequalities \eqref{GN1} and~\eqref{GN2} following the strategy defined in (b). The flow is easiest to construct after changes of variables which reduce the problem to critical interpolation inequalities involving the ultraspherical operator in case of \eqref{GN1} and a similar change of variables in case of \eqref{GN2}. More details are given in Section~\ref{Sec:monotonicity}, \emph{e.g.}, on the set of minimizers of the energy functionals, and some rigidity results are then stated in Section~\ref{Sec:Rigidity}.

In Section~\ref{Sec:Flows} we study some fast diffusion flows related to the difference of the left- and right-hand sides of inequalities \eqref{GN1} and~\eqref{GN2}, showing that sometimes these are gradient flows with respect to well-chosen distances introduced in \cite{pre05312043} and \cite{MR2448650}. For further developments also see~\cite{MR2565840}.

The Appendix~\ref{Sec:AppendixA} contains some auxiliary computations that are useful for flows and rigidity results, and common to \eqref{GN1} and~\eqref{GN2}. In Appendix~\ref{Sec:AppendixB}, for completeness we give a sketch of the method (a) applied to inequalities \eqref{GN1} and \eqref{GN2}.

\section{A duality approach using mass transportation methods}\label{Sec:Duality}

In this section we establish inequalities which relate the two sides of \eqref{Ineq:duality1} and \eqref{Ineq:duality2}. We also investigate the threshold case corresponding to $p=2$.

\subsection{Gagliardo-Nirenberg-Sobolev inequalities with $p>2$}

\begin{lem}\label{Lem:duality1} For any $p\in(2,\infty)$, we have
\[
\sup_{G\in\mathrm L_2^1(\R)\setminus\{0\}}\frac{\iRy{G^\frac{p+2}{3\,p-2}}}{\(\iRy{G\,|y|^2}\)^\frac{p-2}{3\,p-2}\,\(\iRy{G}\)^\frac4{3\,p-2}}\le\mathsf c_p\,\inf_{f\in\H^1(\R)\setminus\{0\}}\frac{\nrml {f'}2^\frac{2\,(p-2)}{3\,p-2}\,\nrml f2^\frac{2\,(p+2)}{3\,p-2}}{\nrml fp^\frac{4\,p}{3\,p-2}}\,.
\]
\end{lem}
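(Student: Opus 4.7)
The plan is to use the mass-transportation approach of Cordero-Erausquin, Nazaret and Villani (\cite{MR2032031,MR2053603}), adapting the transported measure and the H\"older split to the exponents at hand. Introduce the shorthand $\alpha := \frac{p+2}{3p-2}$, $\beta := \frac{p-2}{3p-2}$ and $\gamma := \frac{4}{3p-2}$. The elementary identities $\alpha + 2\beta = 1$ (so $1 - \alpha = 2\beta$), $\alpha - \beta = \gamma$, $p\gamma = 2(\alpha + \beta)$ and, crucially, $(p+2)\beta + 2\alpha = p\alpha$ will orchestrate the proof.

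By joint scale-invariance of both sides, I would first normalize so that $\iR{|f|^p} = \iRy{G}$ and replace $f$ by $|f|$ to assume $f \geq 0$. On the line, monotone rearrangement provides a non-decreasing map $T\colon\R\to\R$ pushing $f^p\,dx$ onto $G\,dy$, i.e.\ $T'(x)\,G(T(x)) = f(x)^p$ almost everywhere. The change of variables $y=T(x)$ then gives the two transport identities
\[
\iRy{G\,|y|^2} \;=\; \iR{T^2 f^p}\qquad\text{and}\qquad \iRy{G^\alpha} \;=\; \iR{f^{p\alpha}(T')^{2\beta}}\,,
\]
the second using $1-\alpha = 2\beta$.

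The central estimate is a short chain. First apply H\"older's inequality with the conjugate exponents $1/(2\beta)$ and $1/\alpha$ (conjugate because $\alpha + 2\beta = 1$) to the factorization $f^{p\alpha}(T')^{2\beta} = \bigl[f^{(p+2)/2}\,T'\bigr]^{2\beta}\cdot\bigl[f^{2}\bigr]^\alpha$; the choice of the auxiliary exponent $(p+2)/2$ is precisely what the identity $(p+2)\beta + 2\alpha = p\alpha$ forces:
\[
\iR{f^{p\alpha}(T')^{2\beta}} \;\le\; \(\iR{f^{(p+2)/2}\,T'}\)^{2\beta}\nrml{f}{2}^{2\alpha}.
\]
Then integrate by parts on the auxiliary factor, $\iR{f^{(p+2)/2}\,T'} = -\tfrac{p+2}{2}\,\iR{T\,f^{p/2}\,f'}$, and close by Cauchy-Schwarz:
\[
\(\iR{f^{(p+2)/2}\,T'}\)^{2} \;\le\; \(\tfrac{p+2}{2}\)^{2}\;\iR{T^2 f^p}\cdot\nrml{f'}{2}^{2}.
\]
Substituting $\iR{T^2 f^p} = \iRy{G\,|y|^2}$ and recognizing $\(\tfrac{p+2}{2}\)^{2\beta}=\mathsf c_p$ from \eqref{Eqn:cp}, the chain yields $\iRy{G^\alpha}\le\mathsf c_p\,\nrml{f'}{2}^{2\beta}\,\nrml{f}{2}^{2\alpha}\(\iRy{G\,|y|^2}\)^\beta$. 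Undoing the normalization $\iR{f^p} = \iRy{G}$ restores the missing factors $\(\iRy{G}\)^\gamma/\nrml{f}{p}^{2(\alpha+\beta)}$ on the two sides (using $p\gamma = 2(\alpha+\beta)$) and delivers the Lemma.

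The main obstacle I anticipate is locating the correct H\"older split in the first step: the auxiliary exponent $(p+2)/2$ is the unique one for which the subsequent integration by parts and a single Cauchy-Schwarz close the estimate with the natural weights $\iR{(f')^2}$ and $\iR{T^2 f^p}$; any other conjugate pair would leave a residual $f$-weight of the form $f^{q}$ with $q\neq 0$ or $q\neq p$ that cannot be absorbed into the norms appearing in the statement. Boundary terms in the IBP and the density argument needed to reduce from smooth compactly supported $f$ to general $f \in \H^1(\R)$ are routine, and the sharp prefactor $\mathsf c_p$ emerges automatically from the combinatorial bookkeeping.
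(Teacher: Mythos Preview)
Your proposal is correct and is essentially the paper's own proof: the paper transports $F=f^p$ onto $G$ via a convex potential $\varphi$ (so your $T=\varphi'$, $T'=\varphi''$), writes $\int G^\theta=\int F^\theta(\varphi'')^{1-\theta}$, applies H\"older with conjugate pair $(1/\theta,1/(1-\theta))$ split so that the two factors become $\int f^2$ and $\int f^{(p+2)/2}\,\varphi''$, then integrates by parts and uses Cauchy--Schwarz exactly as you do. The only cosmetic differences are that the paper normalizes $F$ and $G$ to probability densities and solves for the exponents $\theta,\alpha$ a posteriori, whereas you fix $\alpha,\beta,\gamma$ at the outset and verify the algebraic identities; the transport map, the H\"older split, the auxiliary exponent $(p+2)/2$, and the resulting constant $\mathsf c_p=\bigl(\tfrac{p+2}{2}\bigr)^{2\beta}$ all coincide.
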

\begin{proof} On the line, let $F$ and~$G$ be two probability densities and define the convex map $\varphi$ such that
\[
F(x)=G(\varphi'(x))\,\varphi''(x)\quad\forall\,x\in\R\,.
\]
Let us consider the change of variables $y=\varphi'(x)$, so that $dy=\varphi''(x)\,dx$ and compute, for some $\theta\in(0,1)$ to be fixed later, the integral
\[
\iRy{G^\theta}=\iR{G(\varphi'(x))^\theta\,\varphi''(x)}=\iR{F(x)^\theta\,(\varphi''(x))^{1-\theta}}\,.
\]
According to H\"older's inequality, for any $\alpha\in(0,\theta)$, 
\[
\iR{F(x)^\theta\,(\varphi''(x))^{1-\theta}}=\iR{F^{\theta-\alpha}\,F^\alpha\,(\varphi'')^{1-\theta}}\le\(\iR{F^{1-\frac\alpha\theta}}\)^\theta\,\(\iR{F^{\frac\alpha{1-\theta}}\,\varphi''}\)^{1-\theta}.
\]
Consider now the last integral and integrate by parts:
\[
\iR{F^{\frac\alpha{1-\theta}}\,\varphi''}=-\frac\alpha{1-\theta}\iR{F^{\frac\alpha{1-\theta}-1}\,F'\,\varphi'}=-\frac\alpha{1-\theta}\iR{F^{\frac\alpha{1-\theta}-\frac 1p}\,\varphi'\cdot F^{\frac 1p-1}\,F'}\,.
\]
If we choose $\alpha$ such that
\[
\frac\alpha{1-\theta}-\frac 1p=\frac 12\,,
\]
then we have
\[
\iR{F^{\frac\alpha{1-\theta}}\,\varphi''}=-\frac{\alpha\,p}{1-\theta}\iR{\sqrt F\,\varphi'\cdot \(F^{\frac 1p}\)'}\,.
\]
We deduce from the Cauchy-Schwarz inequality that
\[
\iR{F^{\frac\alpha{1-\theta}}\,\varphi''}\le\frac{\alpha\,p}{1-\theta}\(\iR{F\,|\varphi'|^2}\)^\frac 12\,\(\iR{\Big|\(F^\frac 1p\)'\Big|^2}\)^\frac 12.
\]
We also have
\[
\iR{F\,|\varphi'|^2}=\iRy{G\,|y|^2}\,.
\]
With $f:=F^\frac 1p$, we have found that
\[
\frac{\iRy{G^\theta}}{\(\iRy{G\,|y|^2}\)^\frac{1 - \theta}{2}} \le\(\frac{\alpha\,p}{1-\theta}\)^{1-\theta}\,\(\iR{F^{1-\frac\alpha\theta}}\)^\theta\,\(\iR{\Big|f'\Big|^2}\)^\frac{1-\theta}2.
\]
If we make the choices $p>2$ and
\[
1-\frac\alpha\theta=\frac 2p\,,
\]
then we have shown that
\[
\frac{\iRy{G^\theta}}{\(\iRy{G\,|y|^2}\)^\frac{1-\theta}2}\le\(\frac{\alpha\,p}{1-\theta}\)^{1-\theta}\,\(\iR{f^2}\)^\theta\,\(\iR{\Big|f'\Big|^2}\)^\frac{1-\theta}2.
\]
with
\[
\theta=\frac{p+2}{3\,p-2}\quad\mbox{and}\quad \alpha=\frac{(p-2)\,(p+2)}{p\,(3\,p-2)}\,.
\]
Taking into account the homogeneity, this establishes \eqref{Ineq:duality1}, where the infimum is now taken over all non-trivial functions $f$ in $\mathrm H^1(\R)$ and the supremum is taken over all non-trivial non-negative integrable functions with finite second moment. The computation is valid for any $p\in(2,\infty)$.
\end{proof}

The generalization of our method to higher dimensions $d\ge2$ would involve the replacement of $\iR{F^{\frac\alpha{d\,(1-\theta)}}\,\varphi''}$ by $\int_{\R^d}F^{\frac\alpha{d\,(1-\theta)}}\,\(\mathrm{det}\,\mathrm{Hess}(\varphi)\)^{1/d}\,dx$ in order to use the fact that $\(\mathrm{det}\,\mathrm{Hess}(\varphi)\)^{1/d}\le\frac 1d\,\Delta\varphi$ by the arithmetic-geometric inequality. The reader is invited to check that the system
\[
\frac{\theta-\alpha}{1-d\,(1-\theta)}=\frac2p\,,\quad\frac\alpha{d\,(1-\theta)}=\frac1p+\frac12
\]
has no solutions $(\alpha,\theta)$ such that $\theta\in(0,1)$ if $d\ge2$.

\subsection{Gagliardo-Nirenberg-Sobolev inequalities with $1<p<2$}

\begin{lem}\label{Lem:duality2} For any $p\in(1,2)$, we have
\[
\sup_{G\in\mathrm L_2^1(\R)\setminus\{0\}}\frac{\iRy{G^\frac2{4-p}}}{\(\iRy{G\,|y|^2}\)^\frac{2-p}{2\,(4-p)}\,\(\iRy{G}\)^\frac{p+2}{2\,(4-p)}}\le\mathsf c_p\,\inf_{f\in\H^1(\R)\setminus\{0\}}\,\frac{\nrml{f'}2^\frac{2-p}{4-p}\,\nrml fp^\frac{2\,p}{4-p}}{\nrml f2^\frac{p+2}{4-p}}\,.
\]
\end{lem}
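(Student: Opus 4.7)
The plan is to follow the structure of the mass transportation proof of Lemma~\ref{Lem:duality1} with parameter choices adapted to the regime $p\in(1,2)$. Start from two probability densities $F$ and $G$ on $\R$, introduce the Brenier convex map $\varphi$ with $F(x)=G(\varphi'(x))\,\varphi''(x)$, change variables via $y=\varphi'(x)$ to rewrite $\iRy{G^\theta}=\iR{F^\theta\,(\varphi'')^{1-\theta}}$, split the integrand as $F^{\theta-\alpha}\cdot F^{\alpha}\,(\varphi'')^{1-\theta}$, apply H\"older with conjugate exponents $1/\theta$ and $1/(1-\theta)$, integrate by parts in the second factor, and close with Cauchy-Schwarz.

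The essential modification is in how the test function $f$ encodes $F$. With $F=f^p$ the two constraints ``integration by parts produces $f'$'' and ``the residual $F$-integral is a power of a norm of $f$'' force $\theta=(p+2)/(3p-2)$, which exceeds $1$ for $p<2$ and makes H\"older inapplicable; moreover $\alpha=\theta\,(p-2)/p$ becomes negative. Motivated by the swapped roles of $\nrml fp$ and $\nrml f2$ between \eqref{Ineq:duality1} and \eqref{Ineq:duality2}, the natural substitute is $F=f^2$. With this convention the integration by parts is cleanest under the choice $\alpha/(1-\theta)=1$, that is $\alpha=1-\theta$: then $\iR{F\,\varphi''}=-\,2\iR{f\,f'\,\varphi'}$, which Cauchy-Schwarz bounds by $2\,\(\iR{F\,|\varphi'|^2}\)^{1/2}\,\nrml{f'}2=2\,\(\iRy{G\,|y|^2}\)^{1/2}\,\nrml{f'}2$, using the transport identity $\iR{F\,|\varphi'|^2}=\iRy{G\,|y|^2}$.

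The residual H\"older factor $\iR{F^{1-\alpha/\theta}}$ equals $\nrml fp^p$ exactly when $1-\alpha/\theta=p/2$; combined with $\alpha=1-\theta$ this forces $\theta=2/(4-p)$, which lies in $(2/3,1)$ for every $p\in(1,2)$, so all H\"older steps are admissible. Assembling the estimates under the normalization $\iR F=\iR G=1$ gives the inequality with constant $2^{1-\theta}=2^{(2-p)/(4-p)}=\mathsf c_p$; rescaling $F=f^2/\nrml f2^2$ and $G$ by $\iR G$ to remove the normalizations reintroduces the expected powers of $\nrml f2$ and $\iR G$ and yields the exponents displayed in the statement.

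The main obstacle is locating the right substitute $F=f^2$ and the correct pair $(\alpha,\theta)$: recognizing that the previous choice $F=f^p$ admits no valid $(\alpha,\theta)$ once $p<2$ is what motivates the change, and then demanding that the three target norms $\nrml{f'}2$, $\nrml fp$, $\nrml f2$ appear with prescribed exponents pins down $(\alpha,\theta)$ uniquely. Once this is done, the computation is a direct transcription of the argument in Lemma~\ref{Lem:duality1}.
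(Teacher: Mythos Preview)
Your proposal is correct and follows essentially the same route as the paper's own proof: the same transport identity $\iRy{G^\theta}=\iR{F^\theta\,(\varphi'')^{1-\theta}}$, the same H\"older splitting, the same substitution $f=\sqrt F$ with the parameter choices $\alpha=1-\theta$, $1-\alpha/\theta=p/2$ leading to $\theta=2/(4-p)$, and the same integration-by-parts followed by Cauchy--Schwarz to produce $2\,(\iRy{G\,|y|^2})^{1/2}\,\nrml{f'}2$. Your additional discussion of why $F=f^p$ fails for $p<2$ is a helpful piece of motivation not spelled out in the paper, but the mechanics are identical.
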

\begin{proof} We start as above by writing
\begin{multline*}
\iRy{G^\theta}\le\(\iR{F^{1-\frac\alpha\theta}}\)^\theta\,\(\iR{F^{\frac\alpha{1-\theta}}\,\varphi''}\)^{1-\theta}\\
=\(\iR{F^{1-\frac\alpha\theta}}\)^\theta\,\(-\frac\alpha{1-\theta}\iR{F^{\frac\alpha{1-\theta}-\frac 12}\,\varphi'\cdot F^{\frac 12-1}\,F'}\)^{1-\theta}\\
=\(\iR{F^{1-\frac\alpha\theta}}\)^\theta\,\(-\frac{2\,\alpha}{1-\theta}\iR{F^{\frac\alpha{1-\theta}-\frac 12}\,\varphi'\cdot \(\sqrt F\)'}\)^{1-\theta}.
\end{multline*}
and choose $\alpha$ and $\theta$ such that
\[
1-\frac\alpha\theta=\frac p2\quad\mbox{and}\quad\frac\alpha{1-\theta}-\frac 12=\frac 12\;,
\]
for some $p\in(1,2)$. With $f=\sqrt F$ and
\[
\theta=\frac2{4-p}=1-\alpha\,,
\]
the r.h.s.~can be estimated as
\begin{multline*}
\(\iR{f^p}\)^\theta\,\(-\frac{2\,\alpha}{1-\theta}\iR{\sqrt F\,\varphi'\cdot f'}\)^{1-\theta}\\
\le2^\frac{2-p}{4-p}\(\iR{f^p}\)^\frac2{4-p}\(\iR{|f'|^2}\)^\frac{2-p}{2\,(4-p)}\(\iR{F\,|\varphi'|^2}\)^\frac{2-p}{2\,(4-p)}\,,
\end{multline*}
using a Cauchy-Schwarz inequality. We also have $\iR{F\,|\varphi'|^2}=\iRy{G\,|y|^2}$ as in the proof of Lemma~\ref{Lem:duality1}. Taking into account the homogeneity, this establishes \eqref{Ineq:duality2} where the infimum is now taken over all non-trivial functions $f$ in $\mathrm L^p(\R)$ whose derivatives are square integrable and the supremum is taken over all non-trivial non-negative integrable functions with finite second moment. The computation is valid for any $p\in(1,2)$.\end{proof}

The generalization of our method to higher dimensions $d\ge2$ would involve the replacement of $\iR{F^{\frac\alpha{d\,(1-\theta)}}\,\varphi''}$ by $\int_{\R^d}F^{\frac\alpha{d\,(1-\theta)}}\,\(\mathrm{det}\,\mathrm{Hess}(\varphi)\)^{1/d}\,dx$ in order to use the fact that $\(\mathrm{det}\,\mathrm{Hess}(\varphi)\)^{1/d}\le\frac 1d\,\Delta\varphi$ by the arithmetic-geometric inequality. The reader is invited to check that the system
\[
\frac{\theta-\alpha}{1-d\,(1-\theta)}=\frac p2\,,\quad\frac\alpha{d\,(1-\theta)}=1
\]
has no solutions $(\alpha,\theta)$ such that $\theta\in(0,1)$ if $d\ge2$.

\subsection{The threshold case: logarithmic Sobolev inequality}

We can consider the limit $p\to2$ in \eqref{Ineq:duality1}. If we take the logarithm of both sides of the inequality, multiply by $\frac4{p-2}$ and pass to the limit as $p\to2_+$, we find that
\begin{multline*}
-\,2\,\frac{\iRy{G\,\log G}}{\iRy G}-\log\iRy{|y|^2\,G}+3\log\iRy G-1\\
\le\log\iR{|f'|^2}-\log\iR{|f|^2}-2\,\frac{\iR{|f|^2\,\log |f|^2}}{\iR{|f|^2}}+\log\(\frac 4e\)\,.
\end{multline*}
Hence we recover the following well-known fact.
\begin{lem}\label{Lem:duality3}
\begin{multline*}
\sup_{G\in\mathrm L_2^1(\R)\setminus\{0\}}\left[\log\(\frac{\nrml G1^3}{2\,\pi\iRy{|y|^2\,G}}\)-\,2\,\frac{\iRy{G\,\log G}}{\nrml G1}-1\right]\\
\le\inf_{f\in\H^1(\R)\setminus\{0\}}\,\left[\log\(\frac2{\pi\,e}\,\frac{\nrml{f'}2^2}{\nrml f2^2}\)-2\,\frac{\iR{|f|^2\,\log |f|^2}}{\nrml f2^2}\right]\,.
\end{multline*}
\end{lem}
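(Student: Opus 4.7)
The plan is to recover Lemma~\ref{Lem:duality3} as the limit $p\to 2_+$ of the inequality in Lemma~\ref{Lem:duality1}, applied to a fixed pair $(G,f)$. Pick $G\in\mathrm L_2^1(\R)\setminus\{0\}$ nonnegative and $f\in\H^1(\R)\setminus\{0\}$. Lemma~\ref{Lem:duality1} then gives, with $\theta_p:=(p+2)/(3p-2)$,
\[
\frac{\iRy{G^{\theta_p}}}{(\iRy{G\,|y|^2})^{(p-2)/(3p-2)}\,(\iRy{G})^{4/(3p-2)}} \;\le\; \mathsf c_p\,\frac{\nrml{f'}2^{2(p-2)/(3p-2)}\,\nrml f2^{2(p+2)/(3p-2)}}{\nrml fp^{4p/(3p-2)}}.
\]
At $p=2$ both sides equal $1$, since $\theta_2=1$, $\mathsf c_2=1$, the exponent $(p-2)/(3p-2)$ vanishes, and the two exponents $2(p+2)/(3p-2)$ and $4p/(3p-2)$ both equal $2$. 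Taking the logarithm and multiplying by $4/(p-2)>0$ (which preserves the direction of the inequality) puts the inequality in $0/0$ form, and L'H\^opital's rule reduces the task to computing $4$ times the derivative of $\log\text{LHS}(p)$ and of $\log\text{RHS}(p)$ at $p=2$.

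The needed Taylor expansions at $p=2$, writing $\epsilon=p-2$, are
\[
\theta_p-1 \;=\; -\tfrac{2\epsilon}{3p-2} \;=\; -\tfrac\epsilon 2+O(\epsilon^2),\qquad \log\iRy{G^{\theta_p}} \;=\; \log\nrml G 1-\tfrac\epsilon 2\,\frac{\iRy{G\,\log G}}{\nrml G 1}+O(\epsilon^2).
\]
The $p$-dependent exponents $(p-2)/(3p-2)$ and $4/(3p-2)$ have derivatives at $p=2$ equal to $1/4$ and $-3/4$, which produce the $-\log\iRy{G\,|y|^2}$ and $+3\log\nrml G 1$ contributions on the LHS. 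On the RHS the analogous computation uses
\[
\tfrac{d}{dp}\log\nrml f p\Big|_{p=2} \;=\; -\tfrac 12\log\nrml f 2+\tfrac 14\,\frac{\iR{|f|^2\,\log|f|^2}}{\nrml f 2^2},
\]
together with
\[
\log\mathsf c_p \;=\; \tfrac{2(p-2)}{3p-2}\,\log\tfrac{p+2}{2} \;=\; \tfrac{\epsilon\,\log 2}{2}+O(\epsilon^2),
\]
the last identity contributing $\log 4$ to the $4/(p-2)$-rescaled limit.

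Collecting the contributions, the rescaled LHS converges to $-2\,\iRy{G\,\log G}/\nrml G 1-\log\iRy{G\,|y|^2}+3\log\nrml G 1$, and the rescaled RHS converges to an analogous log-Sobolev functional in $f$; rearrangement of the constants and taking the supremum in $G$ and the infimum in $f$ yields the statement of Lemma~\ref{Lem:duality3}. The main pitfall is the $\mathsf c_p$ factor: because the base $(p+2)/2$ equals $2$ (not $1$) at $p=2$, $\log\mathsf c_p$ is genuinely $O(\epsilon)$ with nonzero coefficient $(\log 2)/2$, and the resulting $\log 4$ contribution is essential for matching the constants $1/(2\pi)$ on the LHS and $2/(\pi e)$ on the RHS of the final inequality. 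A secondary bookkeeping subtlety is that the exponents $2(p+2)/(3p-2)$ and $4p/(3p-2)$ both equal $2$ at $p=2$, so their contributions to $\log\nrml f 2$ and $\log\nrml f p$ nearly cancel and the surviving piece involves the $p$-derivative of $\nrml f p$; this is precisely where the entropy-type term $\iR{|f|^2\log|f|^2}$ enters.
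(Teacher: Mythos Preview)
Your proposal is correct and follows exactly the same route as the paper: take the logarithm of the inequality from Lemma~\ref{Lem:duality1}, multiply by $4/(p-2)$, and let $p\to 2_+$. You supply more explicit Taylor-expansion bookkeeping than the paper (which simply states the limiting inequality), but the method is identical.
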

A similar computation can be done based on \eqref{Ineq:duality2}. For a direct approach based on mass transportation, in any dimension, we may refer to the result established by D.~Cordero-Erausquin in \cite{Cordero00}.

\section{Optimality and best constants}\label{Sec:Optimality}

A careful investigation of the equality cases in all inequalities used in the computations of Section~\ref{Sec:Duality} shows that inequalities in Lemmata~\ref{Lem:duality1}, \ref{Lem:duality2} and \ref{Lem:duality3} can be made equalities for optimal functions as was done, for instance, in \cite{MR2032031}. In this section, we directly investigate the cases of optimality in~\eqref{Ineq:duality1} and~\eqref{Ineq:duality2}, prove the equalities in Theorem~\ref{Thm:duality} and compute the values of the corresponding constants $\mathsf C_{\rm GN}(p)$.

\subsection{Duality in the Gagliardo-Nirenberg-Sobolev inequalities: case $p>2$.}\label{Sec:GN1}
Let us compute
\be{Eqn:C1}
\mathsf C_1(p):=\mathsf c_p\,\inf_{f\in\H^1(\R)\setminus\{0\}}\frac{\nrml {f'}2^\frac{2\,(p-2)}{3\,p-2}\,\nrml f2^\frac{2\,(p+2)}{3\,p-2}}{\nrml fp^\frac{4\,p}{3\,p-2}}\,.
\ee
The infimum is achieved by
\[
f_\star(x)=\frac 1{(\cosh x)^\frac2{p-2}}\quad\forall\,x\in\R\,,
\]
which solves the equation
\[
-\,(p-2)^2\,f''+4\,f-\,2\,p\,f^{p-1}=0\,.
\]
See Appendix~\ref{Sec:AppendixB} for more details. With the formulae
\begin{multline*}
\mathsf I_2:=\iR{f_\star^2}=\frac{\sqrt\pi\,\Gamma\(\frac2{p-2}\)}{\Gamma\(\frac{p+2}{2\,(p-2)}\)}\,,\quad\iR{f_\star^p}=\frac4{p+2}\iR{f_\star^2}\\
\mbox{and}\quad\iR{|f_\star'|^2}=\frac4{(p-2)\,(p+2)}\iR{f_\star^2}\,,
\end{multline*}
one can check that the r.h.s.~in~\eqref{Ineq:duality1} can be computed and amounts to
\[
\mathsf C_1(p)=\frac{(p+2)^\frac{p+2}{3\,p-2}}{4^\frac4{3\,p-2}\,(p-2)^\frac{p-2}{3\,p-2}}\,\mathsf I_2^\frac{2\,(p-2)}{3\,p-2}\,.
\]
On the other hand, the supremum in \eqref{Ineq:duality1} is achieved by
\[
G_\star(y)=\frac1{(1+y^2)^q}\quad\forall\,y\in\R\,,
\]
with
\[
q=\frac{3\,p-2}{2\,(p-2)}\,.
\]
Using the function
\[
h(q):=\int_\R\frac{dy}{(1+y^2)^q}=\frac{\sqrt\pi\,\Gamma\(q-\frac12\)}{\Gamma(q)}\,,
\]
it is easy to observe that
\[
\iRy{G_\star}=h(q)\,,\quad\iRy{G_\star\,|y|^2}=\frac{h(q)}{2\,q-3}\quad\mbox{and}\quad\iRy{G_\star^\frac{p+2}{3\,p-2}}=\frac{2\,(q-1)}{2\,q-3}\,h(q)\,,
\]
and recover that the l.h.s.~in~\eqref{Ineq:duality1} also amounts to $\mathsf C_1(p)$. With Lemma~\ref{Lem:duality1}, this completes the proof of Theorem~\ref{Thm:duality} when $p>2$. This also shows that the best constant in \eqref{GN1} is 
\[
\mathsf C_{\rm GN}(p)=\(\frac{\mathsf C_1(p)}{\mathsf c(p)}\)^\frac{3\,p-2}{4\,p}.
\]
with $\mathsf C_1(p)$ and $\mathsf c(p)$ given by \eqref{Eqn:C1} and \eqref{Eqn:cp} respectively.

\subsection{Second case in the Gagliardo-Nirenberg-Sobolev inequalities: case $1<p<2$.}\label{Sec:GN2}

Let us compute
\be{Eqn:C2}
\mathsf C_2(p):=\mathsf c_p\,\inf_{f\in\H^1(\R)\setminus\{0\}}\,\frac{\nrml{f'}2^\frac{2-p}{4-p}\,\nrml fp^\frac{2\,p}{4-p}}{\nrml f2^\frac{p+2}{4-p}}\,.
\ee
The infimum is achieved by
\[
f_*(x)=(\cos x)^\frac2{2-p}\quad\forall\,x\in\left[-\tfrac\pi2,\tfrac\pi2\right]\,,\quad f_*(x)=0\quad\forall\,x\in\R\setminus\left[-\tfrac\pi2,\tfrac\pi2\right]\,,
\]
which solves the equation
\[
-\,(2-p)^2\,f''-4\,f+\,2\,p\,f^{p-1}=0\,.
\]
With the formulae
\begin{multline*}
\mathsf J_2:=\iR{f_*^2}=\frac{\sqrt\pi\,\Gamma\(\frac{6-p}{2\,(2-p)}\)}{\Gamma\(\frac{4-p}{2-p}\)}\,,\quad\iR{f_*^p}=\frac4{p+2}\iR{f_*^2}\\
\mbox{and}\quad\iR{|f_*'|^2}=\frac4{(2-p)\,(2+p)}\iR{f_*^2}\,,
\end{multline*}
one can check that the r.h.s.~in~\eqref{Ineq:duality2} can be computed and amounts to
\[
\mathsf C_2(p)=4\,(2+p)^{-\frac{6-p}{2\,(4-p)}}\,(2-p)^{-\frac{2-p}{2\,(4-p)}}\,\mathsf J_2 ^\frac{2-p}{4-p}\,.
\]

On the other hand, the supremum in \eqref{Ineq:duality2} is achieved by
\[
G_*(y)=\frac1{(1+y^2)^q}\quad\forall\,y\in\R\,,
\]
with
\[
q=\frac{4-p}{2-p}\,.
\]
Using the function $h(q)$ as in the first case, $h(\frac{4-p}{2-p})=\mathsf J_2$ and the relations
\[
\iRy{G_*}=h(q)\,,\quad\iRy{G_*\,|y|^2}=\frac{h(q)}{2\,q-3}\quad\mbox{and}\quad\iRy{G_*^\frac{p+2}{3\,p-2}}=\frac{2\,(q-1)}{2\,q-3}\,h(q)\,,
\]
we recover that the l.h.s.~in~\eqref{Ineq:duality1} also amounts to $\mathsf C_2(p)$. With Lemma~\ref{Lem:duality2}, this completes the proof of Theorem~\ref{Thm:duality} when $p<2$. This also shows that the best constant in \eqref{GN2} is 
\[
\mathsf C_{\rm GN}(p)=\(\frac{\mathsf C_2(p)}{\mathsf c(p)}\)^\frac{4-p}{2+p}.
\]
with $\mathsf C_2(p)$ and $\mathsf c(p)$ given by \eqref{Eqn:C2} and \eqref{Eqn:cp} respectively.

\subsection{Consistency with the logarithmic Sobolev inequality.} The reader is invited to check that for $p=2$, we have $\lim_{p\to2_+}\mathsf C_1(p)=\lim_{p\to2_-}\mathsf C_2(p)=1$ and
\[
4\,\lim_{p\to2_+}\frac{\mathsf C_1(p)-1}{p-2}=1+\log\(2\,\pi\)=4\,\lim_{p\to2_-}\frac{1-\mathsf C_2(p)}{2-p}\,.
\]
This is consistent with the fact that we have equality in Lemma~\ref{Lem:duality3} and can actually write
\begin{cor}\label{Cor:duality3}
\begin{multline*}
\sup_{G\in\mathrm L_2^1(\R)\setminus\{0\}}\left[\log\(\frac{\nrml G1^3}{2\,\pi\iRy{|y|^2\,G}}\)-\,2\,\frac{\iRy{G\,\log G}}{\nrml G1}-1\right]\\
=\inf_{f\in\H^1(\R)\setminus\{0\}}\,\left[\log\(\frac2{\pi\,e}\,\frac{\nrml{f'}2^2}{\nrml f2^2}\)-2\,\frac{\iR{|f|^2\,\log |f|^2}}{\nrml f2^2}\right]=0\,.
\end{multline*}
\end{cor}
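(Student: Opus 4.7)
The direction $\sup\le\inf$ is given by Lemma~\ref{Lem:duality3}, so the task is to promote it to an equality and identify the common value as zero. The plan is to pass to the limit $p\to 2$ in Theorem~\ref{Thm:duality} and then to evaluate both brackets on a Gaussian.

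The central observation is that Theorem~\ref{Thm:duality} provides an \emph{equality} of the supremum and infimum in \eqref{Ineq:duality1} and \eqref{Ineq:duality2}, not merely an inequality. Taking the logarithm of either identity, multiplying by $\tfrac{4}{p-2}$, and sending $p\to 2_\pm$ converts the multiplicative equality into an additive one of log-Sobolev type. The limiting coefficients can be read off from the first-order expansions of $\mathsf C_1(p)$ and $\mathsf C_2(p)$ already noted above (both tend to $1$ with derivative $\tfrac14(1+\log 2\pi)$ at $p=2$), together with the elementary identity $\tfrac{d}{dp}\log\|f\|_p\big|_{p=2}=-\tfrac12\log\|f\|_2+\tfrac{1}{4\|f\|_2^2}\int|f|^2\log|f|^2\,dx$. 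After collecting the $p$-derivatives of the numerator and denominator in \eqref{Ineq:duality1}--\eqref{Ineq:duality2}, one recovers exactly the functional of Lemma~\ref{Lem:duality3} on each side, this time as an equality.

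To pin the common value at zero, I would test with a Gaussian. Taking $f(x)=e^{-\pi x^2/2}$, a direct computation gives $\|f\|_2=1$, $\|f'\|_2^2=\pi/2$ and $\int f^2\log f^2\,dx=-\tfrac12$, so the right-hand bracket equals $\log\bigl(\tfrac{2}{\pi e}\cdot\tfrac{\pi}{2}\bigr)-2(-\tfrac12)=-1+1=0$. Symmetrically, $G(y)=e^{-\pi y^2}$ satisfies $\|G\|_1=1$, $\int y^2 G\,dy=\tfrac1{2\pi}$ and $\int G\log G\,dy=-\tfrac12$, so the left-hand bracket is $\log 1-2(-\tfrac12)-1=0$. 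Combined with $\sup\le\inf$, both values equal~$0$.

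The main obstacle is the rigor of the $p\to2$ limit: one must check that the Taylor expansion is valid for arbitrary admissible pairs $(f,G)$ and not only at the explicit optimizers $f_\star$ and $G_\star$, and one must reconcile the compactly supported optimizers of the $1<p<2$ regime on $(-\tfrac\pi2,\tfrac\pi2)$ with the whole-line setting of the log-Sobolev inequality. A cleaner alternative that bypasses the limit would be to identify the right-hand inequality in Lemma~\ref{Lem:duality3} with the classical sharp log-Sobolev inequality~\eqref{log-Sob}---so that $\inf\ge 0$ with equality at Gaussians---and to obtain the dual $\sup\le 0$ by rerunning the mass-transport scheme of Lemma~\ref{Lem:duality1} at the critical limit $\alpha/\theta\to 0$, with $\log F$ playing the role of a vanishing power of $F$.
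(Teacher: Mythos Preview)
Your argument is correct and matches the paper's approach: both combine the inequality $\sup\le\inf$ from Lemma~\ref{Lem:duality3} with an explicit evaluation at Gaussians showing that each bracket vanishes, which forces $\sup=\inf=0$. The paper likewise presents the $p\to2$ expansion of $\mathsf C_1(p)$ and $\mathsf C_2(p)$ only as a consistency check, and---as you rightly note in your last paragraph---the Gaussian test is the step that actually closes the argument without any need to justify interchanging the limit $p\to2$ with the $\sup$ or $\inf$.
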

The reader is invited to check that equality is realized by
\[
G(x)=|f(x)|^2=\frac{e^{-\frac{|xl^2}2}}{\sqrt{2\,\pi}}\,,\quad x\in\R.
\]
Hence we recover not only the logarithmic Sobolev inequality in Weissler's form \cite{MR479373}, but also the fact that the equality case is achieved by Gaussian functions.

\section{Gagliardo-Nirenberg-Sobolev inequalities, monotonicity and flows}\label{Sec:monotonicity}

This section is devoted to the proof of Theorems~\ref{thm:flow1} and~\ref{thm:flow2}, and their consequences.

\subsection{Inequality \eqref{atlanta1} (case $p>2$) and the ultraspherical operator}\label{flowplargerthan2}

In this section we reduce the inequality \eqref{atlanta1} on the line to a weighted problem on the interval $(-1,1)$. For $p>2$, Gagliardo-Nirenberg-Sobolev inequalities on the line indeed are equivalent to critical interpolation inequalities for the ultraspherical operator (see \cite{MR1231419}; these inequalities correspond to the well-known inequalities on the sphere \cite{MR1134481,MR1230930,DEKL} when the dimension is an integer).

In order to make our strategy easier to understand, the proofs have been divided in a series of statements. Some of them go beyond what is required for the proofs of the results in Section~\ref{Sec:Intro}.

\newclaim{The inequality \eqref{atlanta1} on the line is equivalent to the critical problem for the ultraspherical operator.}

Recall that inequality \eqref{atlanta1} is given by
\be{atlanta3}
\iR{|v'|^2}+\frac4{(p-2)^2}\iR{|v|^2}\ge\mathsf C\(\iR{|v|^p}\)^\frac2p\,.
\ee
With
\[
z(x)=\tanh x\,,\quad v_\star=(1-z^2)^\frac 1{p-2}\quad\mbox{and}\quad v(x)=v_\star(x)\,f(z(x))\,,
\]
so that, as seen in Section \ref{Sec:GN1}, equality is achieved for $f=1$, \emph{i.e.} with
\[
\mathsf C=\frac{2\,p}{(p-2)^2}\(\iR{|v_\star|^p}\)^{1-\frac2p}\,,
\]
and, if we let $\nu(z):=1-z^2$, the above inequality is equivalent~to
\be{Ineq:Ultraspherical}
\izp{|f'|^2\,\nu}+\frac{2\,p}{(p-2)^2}\izp{|f|^2}\ge\frac{2\,p}{(p-2)^2}\(\izp{|f|^p}\)^\frac2p
\ee
where $d\nu_p$ denotes the probability measure $d\nu_p(z):=\frac1{\zeta_p}\,\nu^\frac2{p-2}\,dz$, $\zeta_p:=\sqrt\pi\,\frac{\Gamma\(\frac p{p-2}\)}{\Gamma\(\frac{3\,p-2}{2\,(p-2)}\)}$. Integration
by parts leads to
\[
\izp{|f'|^2\,\nu}=-\izp{f\,(\L f)}\quad\mbox{where}\quad \L f:=\nu\,f''-\,\frac{2\,p}{p-2}\,z\,f' \ .
\]
If we set
\[
d=\frac{2\,p}{p-2}\quad\Longleftrightarrow\quad p=\frac{2\,d}{d-2}\,.
\]
the operator $\L$ is the ultraspherical operator. Thus, we see that the inequality \eqref{atlanta3} on the line is equivalent to a problem that involves the $d$-ultraspherical operator.

When $d$ is an integer, it is known that the inequality for the ultraspherical operator \eqref{Ineq:Ultraspherical} is equivalent to an inequality on the $d$-dimensional sphere (see for instance \cite{DEKL} and references therein). We are now interested in the monotonicity of the functional
\[
f\mapsto\mathsf F[f]:=\izp{|f'|^2\,\nu}+\frac{2\,p}{(p-2)^2}\izp{|f|^2}-\frac{2\,p}{(p-2)^2}\(\izp{|f|^p}\)^\frac2p
\]
along a well-chosen nonlinear flow.

\newclaim{There exists a nonlinear flow along which our functional is monotone non-increasing.}

With the above notations, the problem is reduced to the computation on the $d$-dimensional sphere in the ultraspherical setting. Here we adapt the strategy of \cite{DEKL} and \cite{dolbeault:hal-00784887}. We recall (see Proposition~\ref{Prop:2id} in Appendix~\ref{Sec:AppendixA}, with $\aa=1$ and $\bb=\frac d2-1$) that
\[
\izp{(\L u)^2}=\izp{|u''|^2\;\nu^2}+d\izp{|u'|^2\;\nu}
\]
and
\[
\izp{(\L u)\,\frac{|u'|^2}u\,\nu}=\frac d{d+2}\izp{\frac{|u'|^4}{u^2}\;\nu^2}-\,2\,\frac{d-1}{d+2}\izp{\frac{|u'|^2\,u''}u\;\nu^2}\,.
\]
On $(-1,1)$, let us consider the flow
\[
u_t=u^{2-2\beta}\(\L u+\kappa\,\frac{|u'|^2}u\,\nu\)
\]
and notice that
\[
\frac d{dt}\izp{u^{\beta p}}=\beta\,p\,(\kappa-\beta\,(p-2)-1)\izp{u^{\beta(p-2)}\,|u'|^2\,\nu}\,,
\]
so that $\overline u=\(\izp{u^{\beta p}}\)^{1/(\beta p)}$ is preserved if $\kappa=\beta\,(p-2)+1$. With $\beta=\frac 4{6-p}$, a lengthy computation shows that
\begin{multline} \label{atlanta4}
\frac 1{2\,\beta^2}\,\frac d{dt}\izp{\(|(u^\beta)'|^2\,\nu+\frac d{p-2}\,\(u^{2\beta}-\overline u^{2\beta}\)\)}\\
=-\izp{\(\L u+(\beta-1)\,\frac{|u'|^2}u\,\nu\)\(\L u+\kappa\,\frac{|u'|^2}u\,\nu\)}+\frac d{p-2}\,\frac{\kappa-1}\beta\izp{|u'|^2\,\nu}\\
=-\izp{|u''|^2\,\nu^2}+\,2\,\frac{d-1}{d+2}\,(\kappa+\beta-1)\izp{u''\,\frac{|u'|^2}u\,\nu^2}\\
-\left[\kappa\,(\beta-1)+\,\frac d{d+2}\,(\kappa+\beta-1)\right]\izp{\frac{|u'|^4}{u^2}\,\nu^2}\\
=-\izp{\left|u''-\frac{p+2}{6-p}\,\frac{|u'|^2}u\right|^2\nu^2}\,. 
\end{multline}

The choice of the change of variables $f=u^\beta$ was motivated by the fact that the last term in the above identities is two-homogeneous in $u$, thus making the completion of the square rather simple. It is also a natural extension of the case that can be carried out with a linear flow (see \cite{DEKL}, and \cite{MR808640} for a much earlier result in this direction). In the above computations $p=6$ seems to be out of reach, but as we see below, this case can also be treated by writing the flow in the original variables.

\newclaim{There is no restriction on the range of the exponents.}

With $f=u^\beta$, the problem can be rewritten in the setting of the ultraspherical operator using the flow
\[
f_t=f^{1-\frac p2}\left[\L f+\tfrac p2\,(1-\,z^2)\,\frac{|f'|^2}f\right]\,,
\]
and we notice that there is no more singularity when $p=6$ since
\begin{multline*}
\frac d{dt}\left[\izp{|f'|^2\,\nu}+\frac{2\,p}{(p-2)^2}\izp{|f|^2}-\mathsf C\(\izp{|f|^p}\)^\frac2p\right]\\
=-\,2\izp{f^{1-\frac p2}\,\left|f''-\frac p2\,\frac{|f'|^2}f\right|^2\nu^2}\,.
\end{multline*}
We get the flow on the line by undoing the change of variables: the function $v(t,x)=v_\star(x)\,f(t,z(x))$ solves
\[
v_t=\frac{v^{1-\frac p2}}{\sqrt{1-z^2}}\left[v''+\,\frac{2\,p}{p-2}\,z\,v'+\,\frac p2\,\frac{|v'|^2}v+\,\frac2{p-2}\,v\,\right]\,,
\]
and we find that
\begin{multline*}
\frac d{dt}\left[\iR{|v'|^2}+\frac4{(p-2)^2}\iR{|v|^2}-\mathsf C\(\iR{|v|^p}\)^\frac2p\right]\\
=-\,2\iR{\frac1{(1-z^2)^2}\(\frac v{v_\star}\)^{1-\frac p2}\left|v''-\frac p2\,\frac{|v'|^2}v+\frac2{p-2}\,v\right|^2}\,.
\end{multline*}

\newclaim{There exists a one-dimensional family of minimizers for $\mathsf F$.}

A function $f$ is in the constant energy manifold, \emph{i.e.}, $\mathcal F[f(t)]$ does not depend on $t$, if and only if $(f'\,f^{-p/2})'=0$, that is, $f(z)=(\mathsf a+\mathsf b\,z)^{-\frac2{p-2}}$. However, none of the elements of that manifold, except the one corresponding to $\aa=1$ and $\bb=0$, are left invariant under the action of the flow and the coefficients $\mathsf a$ and $\mathsf b$ obey to the system of ordinary differential equations
\[
\frac{d\,\mathsf a}{dt}=-\frac{2\,p}{p-2}\,\mathsf b^2\quad\mbox{and}\quad\frac{d\,\mathsf b}{dt}=-\frac{2\,p}{p-2}\,\mathsf a\,\mathsf b\,.
\]
The reader is invited to check that on the line, such functions are given by
\[
v(t,x)=\frac1{\cosh(x+x(t))^\frac2{p-2}}\quad\forall\,(t,x)\in\R^2\,,
\]
with $\mathsf a(t)=\cosh(x(t))$ and $\mathsf b(t)=\sinh(x(t))$. A straightforward but painful computation provides an explicit expression for $t\mapsto x(t)$. 

\newclaim{The inequality on the line can be reinterpreted using the stereographic projection and the Emden-Fowler transformation.}

Inequality \eqref{Ineq:Ultraspherical} (in ultraspherical coordinates) is
\[
\izp{|f'|^2\,\nu}+\frac{2\,p}{(p-2)^2}\izp{|f|^2}\ge\frac{2\,p}{(p-2)^2}\(\izp{|f|^p}\)^\frac2p\,,
\]
where $d\nu_p$ denotes the probability measure $d\nu_p(z):=\frac1{\zeta_p}\,\nu^\frac2{p-2}\,dz$, $\zeta_p:=\sqrt\pi\,\frac{\Gamma\(\frac p{p-2}\)}{\Gamma\(\frac{3\,p-2}{2\,(p-2)}\)}$ and
\[
d=\frac{2\,p}{p-2}\quad\Longleftrightarrow\quad p=\frac{2\,d}{d-2}\,.
\]
Since $\frac{2\,p}{(p-2)^2}=\frac 14\,d\,(d-2)$, the above inequality can be rewritten as
\[
\frac 4{d\,(d-2)}\izp{|f'|^2\,\nu}+\izp{|f|^2}\ge\(\izp{|f|^p}\)^\frac2p\,.
\]
Assume that
\[
f(z)=(1-z)^{1-\frac d2}\,u(r)\quad\mbox{with}\quad z=1-\frac 2{1+r^2}\quad\Longleftrightarrow\quad r=\sqrt{\frac{1+z}{1-z}}\,.
\]
When $d$ is an integer, this first change of variables corresponds precisely to the \emph{stereographic projection}. Then by direct computation we find that
\[
\frac 4{d\,(d-2)}\izp{|f'|^2\,\nu}+\izp{|f|^2}=\frac 4{d\,(d-2)}\,\frac1{\zeta_p}\ir{|u'|^2}\,,
\]
\[
\izp{|f|^p}=\frac1{\zeta_p}\ir{|u|^p}\,,
\]
so that the inequality becomes
\[
\ir{|u'|^2}\ge\frac14\,d\,(d-2)\,\zeta_p^{1-\frac 2p}\(\ir{|u|^p}\)^\frac 2p\,.
\]
Let $u(r):=r^{1-\frac d2}\,v(x)$ with $x=\log r$. This second change of variables is the \emph{Emden-Fowler transformation}. Then we get
\[
\iR{|v'|^2}+\frac 14\,(d-2)^2\iR{|v|^2}\ge \frac14\,d\,(d-2)\,\zeta_p^{1-\frac 2p}\(\iR{|v|^p}\)^\frac 2p\,.
\]
Recalling how $p$ and $d$ are related, this means
\[
\iR{|v'|^2}+\frac4{(p-2)^2}\,\iR{|v|^2}\ge\frac{2\,p}{(p-2)^2}\,\zeta_p^{1-\frac 2p}\(\iR{|v|^p}\)^\frac 2p\,.
\]
Collecting the two changes of variables, what has been done amounts to the change of variables
\[
z(x)=\tanh x\,,\quad v_\star=\nu^\frac 1{p-2}\quad\mbox{and}\quad v(x)=v_\star(x)\,f(z(x))\,.
\]
This explains why \emph{the problem on the line is equivalent to the critical problem on the sphere} (when $d$ is an integer) or why the problem on the line is equivalent to the critical problem for the ultraspherical operator.

\subsection{Inequality \eqref{atlanta2} (Case $1<p<2$)}\label{flowplessthan2}

The computations for $p>2$ and $p<2$ are similar. This is what occurs in the construction of a nonlinear flow. For the convenience of the reader, we also subdivide this section in a series of claims.

\newclaim{The interpolation inequality is equivalent to a weighted interpolation inequality on the line.}

The Gagliardo-Nirenberg-Sobolev inequality on the line with $p\in(1,2)$ is equivalent~to
\begin{multline}\label{Ineq:duality2bis}
\iR{|v'|^2}-\frac4{(2-p)^2}\iR{|v|^2}\ge-\,\frac{2\,p}{(2-p)^2}\iR{|v_*|^p}=-\,\mathsf C\(\iR{|v|^p}\)^\frac 2p\\
\forall\,v\in\H^1(\R)\cap\mathrm L^p(\R)\quad\mbox{such that}\quad\iR{|v|^p}=\iR{|v_*|^p}\,.
\end{multline}
Following the computations of Section \ref{Sec:GN2}, we have
\[
\mathsf C=\frac{2\,p}{(2-p)^2}\(\iR{|v_*|^p}\)^{1-\frac 2p}\,.
\]
Assume that $v$ is supported in the interval $(-\frac\pi2,\frac\pi2)$. With $\xi(\z)=1+\z^2$, so that for any $x\in(-\frac\pi2,\frac\pi2)$
\[
\z(x)=\tan x\,,\quad v_*=\xi(\z)^{-\frac 1{2-p}}\quad\mbox{and}\quad v(x)=v_*(x)\,f(\z(x))\,,
\]
the inequality is equivalent to
\be{Ineq:zp<2}
\izpp{|f'|^2\,\xi}+\frac{2\,p}{(2-p)^2}\(\izpp{|f|^p}\)^\frac2p\ge\frac{2\,p}{(2-p)^2}\izpp{|f|^2}\,,
\ee
where $d\xi_p$ denotes the probability measure $d\xi_p(\z):=\frac1{\zeta_p}\,\xi^{-\frac2{2-p}}\,d\z$ with $\zeta_p:=\sqrt\pi\,\frac{\Gamma\(\frac{2+p}{2\,(2-p)}\)}{\Gamma\(\frac2{2-p}\)}$. Let us define
\[
\L f:=\xi\,f''-\,\frac{2\,p}{2-p}\,\z\,f'\,.
\]
Notice that $\mathsf C=\frac{2\,p}{(2-p)^2}\,\zeta_p^{1-\frac 2p}$. Inequality~\eqref{Ineq:zp<2} is equivalent to the inequality~\eqref{Ineq:duality2bis} and is therefore optimal. We are interested in the monotonicity of the functional
\[
f\mapsto\mathsf F[f]:=\izpp{|f'|^2\,\xi}+\frac{2\,p}{(2-p)^2}\(\izpp{|f|^p}\)^\frac2p-\frac{2\,p}{(2-p)^2}\izpp{|f|^2}
\]
along a well-chosen nonlinear flow. We will first establish two identities.

\newclaim{There are also two key identities in the case $p\in(1,2)$.}

As a preliminary observation, we can observe that
\[
\left[\frac d{dx},\L\right]u=2\,\z\,u''-\,\frac{2\,p}{2-p}\,u'\,,
\]
so that we immediately get
\begin{multline*}
\izpp{(\L u)^2}=-\izpp{\xi\,u'\,(\L u)'}\\
=-\izpp{\xi\,u'\,(\L u')}-\izpp{\xi\,u'\,(2\,\z\,u''-\,\frac{2\,p}{2-p}\,u')}\\
\hspace*{3cm}=\izpp{\xi\,(\xi\,u')'\,u''}-\izpp{\xi\,u'\,(2\,\z\,u''-\,\frac{2\,p}{2-p}\,u')}\\
=\izpp{|u''|^2\;\xi^2}+\frac{2\,p}{2-p}\izpp{|u'|^2\;\xi}
\end{multline*}
and
\begin{multline*}
\izpp{(\L u)\,\frac{|u'|^2}u\,\xi}=\izpp{\xi\,u'\,\(\frac{|u'|^2\,u'}{u^2}\,\xi-2\,\frac{u'\,u''}u\,\xi-2\,\z\,\frac{|u'|^2}u\)}\\
=\frac p{2\,(p-1)}\izpp{\frac{|u'|^4}{u^2}\;\xi^2}-\frac{p+2}{2\,(p-1)}\izpp{\frac{|u'|^2\,u''}u\,\xi^2}\,,
\end{multline*}
since
\begin{multline*}
\izpp{\frac{|u'|^2\,u''}u\,\xi^2}=\frac 13\int_\R{(|u'|^2\,u')'\,\frac 1u\,\xi^{-2\,\frac{p-1}{2-p}}\;d\z}\\
=\frac 13\izpp{\frac{|u'|^4}{u^2}\;\xi^2}+\frac 43\,\frac{p-1}{2-p}\izpp{\frac{|u'|^2\,u'}u\;\z\,\xi}\,,
\end{multline*}
and hence
\[
\izpp{\frac{|u'|^2\,u'}u\;\z\,\xi}=\frac{3\,(2-p)}{4\,(p-1)}\izpp{\frac{|u'|^2\,u''}u\,\xi^2}-\frac{2-p}{4\,(p-1)}\izpp{\frac{|u'|^4}{u^2}\;\xi^2}\,.
\]

Notice that these two identities enter in the general framework which is described in Appendix~\ref{Sec:AppendixA} with $\xi(\z)=1+\z^2$, $\aa=1$ and $\bb=-\frac2{2-p}$. Since they are not as standard as the ones corresponding to the ultraspherical operator, we have given a specific proof.

\newclaim{There exists a nonlinear flow along which our functional is monotone non-increasing.}

On $\R$, let us consider the flow
\[
u_t=u^{2-2\beta}\(\L u+\kappa\,\frac{|u'|^2}u\,\xi\)\,,
\]
and notice that
\[
\frac d{dt}\izpp{u^{\beta p}}=\beta\,p\,(\kappa-\beta\,(p-2)-1)\izpp{u^{\beta(p-2)}\,|u'|^2\,\xi}\,,
\]
so that $\overline u=\(\izpp{u^{\beta p}}\)^{1/(\beta p)}$ is preserved if $\kappa=\beta\,(p-2)+1$. Using the above estimates, a straightforward computation shows that
\begin{multline*}
\frac 1{2\,\beta^2}\,\frac d{dt}\izpp{\(|(u^\beta)'|^2\,\xi-\frac{2\,p}{(2-p)^2}\,\(u^{2\beta}-\overline u^{2\beta}\)\)}\\
=-\izpp{\(\L u+(\beta-1)\,\frac{|u'|^2}u\,\xi\)\(\L u+\kappa\,\frac{|u'|^2}u\,\xi\)}-\frac{2\,p}{(2-p)^2}\,\frac{\kappa-1}\beta\izpp{|u'|^2\,\xi}\\
=-\izpp{|u''|^2\,\xi^2}+\frac{p+2}{2\,(p-1)}\,(\kappa+\beta-1)\izpp{u''\,\frac{|u'|^2}u\,\xi^2}\\
-\left[\kappa\,(\beta-1)+\frac p{2\,(p-1)}\,(\kappa+\beta-1)\right]\izpp{\frac{|u'|^4}{u^2}\,\xi^2}\,.
\end{multline*}
With
\[
\beta=\frac4{6-p}\,,
\]
we get
\[
\frac d{dt}\izpp{\(|(u^\beta)'|^2\,\xi-\frac{2\,p}{(2-p)^2}\,\(u^{2\beta}-\overline u^{2\beta}\)\)}=\,-\,2\,\beta^2\izpp{\left|u''-\frac{p+2}{6-p}\,\frac{|u'|^2}u\right|^2\,\xi^2}\,.
\]

\newclaim{The flow can be rewritten in original variables.}

With $f=u^\beta$, the problem can be rewritten using the flow
\[
f_t=f^{1-\frac p2}\left[\L f+\tfrac p2\,\xi\,\frac{|f'|^2}f\right]\,,
\]
and we find that
\begin{multline*}
\frac d{dt}\left[\izpp{|f'|^2\,\xi}-\frac{2\,p}{(2-p)^2}\izpp{|f|^2}+\mathsf C\(\izpp{|f|^p}\)^\frac2p\right]\\
=-\,2\izpp{f^{1-\frac p2}\,\left|f''-\frac p2\,\frac{|f'|^2}f\right|^2\xi^2}\,.
\end{multline*}
We get the flow on $(-\frac\pi2,\frac\pi2)$ by undoing the change of variables: the function $v(t,x)=v_*(x)\,f(t,\z(x))$ solves
\[
v_t=\frac{v^{1-\frac p2}}{\sqrt{1+\z^2}}\left[v''+\,\frac{2\,p}{2-p}\,\z\,v'+\,\frac p2\,\frac{|v'|^2}v+\,\frac2{2-p}\,v\,\right]\,,
\]
and we find that
\begin{multline*}
\frac d{dt}\left[\ilcpt{|v'|^2}-\frac4{(2-p)^2}\ilcpt{|v|^2}+\mathsf C\(\ilcpt{|v|^p}\)^\frac2p\right]\\
=-\,2\iR{\frac1{(1+\z^2)^2}\(\frac v{v_*}\)^{1-\frac p2}\left|v''-\frac p2\,\frac{|v'|^2}v+\frac2{2-p}\,v\right|^2}\,.
\end{multline*}

\subsection{Consequences: monotonicity of the functionals associated to the Gagliardo-Nirenberg-Sobolev inequalities along the flows}

With the results of Sections~\ref{flowplargerthan2} and~\ref{flowplessthan2}, the proofs of Theorems~\ref{thm:flow1} and~\ref{thm:flow2} are rather straightforward and left to the reader.

\section{Rigidity results}\label{Sec:Rigidity}

In the case of compact manifolds with positive Ricci curvature, rigidity results were established (for instance in \cite{MR1134481,MR615628}) before the role of flows in the monotonicity of the functionals associated to the inequalities was clarified (see in particular \cite{MR2381156,MR2459454,dolbeault:hal-00784887}). However, such results are of interest by themselves.

\subsection{The case of a superlinear elliptic equation.}\label{Sec:Rigidity1}

With the notations of Section~\ref{flowplargerthan2}, consider the equation
\be{Eqn:Ell1}
-\,\L f+\lambda\,f=f^{p-1}\,, 
\ee
where $f:\R\rightarrow \R_+$ and $p>2$. Note that this equation is the Euler-Lagrange equation of the functional \eqref{atlanta5}.
If $u$ is such that $f=u^\beta$, then we notice that the equation can be rewritten as 
\[
\L u+(\beta-1)\,\frac{|u'|^2}u\,\nu-\frac\lambda\beta\,u+\frac{u^\kappa}{\beta}=0\,,
\]
with $\kappa=\beta\,(p-2)+1$. As in \cite{DEKL}, we notice that
\[
\izp{(\L u)\,u^\kappa}=-\,\kappa\,\izp{u^{\kappa-1}\,|u'|^2}\quad\mbox{and}\quad\izp{\frac{|u'|^2}u\,u^\kappa\,\nu}=\izp{u^{\kappa-1}\,|u'|^2}\,,
\]
so that
\[
\izp{\(\L u+\kappa\,\frac{|u'|^2}u\,\nu\)\,u^\kappa}=0\,.
\]
Hence, by \eqref{atlanta4} we know that
\begin{multline*}
0=\izp{\(\L u+\kappa\,\frac{|u'|^2}u\,\nu\)\,\(\L u+(\beta-1)\,\frac{|u'|^2}u\,\nu-\frac\lambda\beta\,u\)}\\
=\(\frac{2\,p}{p-2}-\lambda\,\frac{\kappa-1}\beta\)\izp{|u'|^2\,\nu}+\izp{\left|u''-\frac{p+2}{6-p}\,\frac{|u'|^2}u\right|^2\nu^2}\,.
\end{multline*}
This proves
\begin{thm}\label{cor:Rigidity1} Let $p\in(2,\infty), p \not= 6$. Assume that $f$ is a positive solution of the equation~\eqref{Eqn:Ell1}. 
If $\lambda<\frac{2\,p}{(p-2)^2}$, then $f$ is constant.
\end{thm}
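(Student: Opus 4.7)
The proof is the static Bakry-Emery counterpart of the entropy-dissipation computation of Section~\ref{flowplargerthan2}. The plan is to multiply the elliptic equation by the same nonlinear expression whose square produced the monotone quantity along the flow, integrate against the ultraspherical measure $d\nu_p$, and recognise the outcome as a sum of two manifestly non-negative quantities, which must therefore both vanish.

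Concretely, I would first reduce to the ultraspherical picture by the change of variables of Section~\ref{flowplargerthan2} and substitute $f=u^\beta$ with the distinguished exponent $\beta=\frac{4}{6-p}$. Under this substitution, \eqref{Eqn:Ell1} rewrites as
\begin{equation*}
\L u + (\beta-1)\,\frac{|u'|^2}{u}\,\nu - \frac{\lambda}{\beta}\,u + \frac{u^\kappa}{\beta}=0, \qquad \kappa:=\beta(p-2)+1.
\end{equation*}
Then I would multiply by $\L u + \kappa\,\frac{|u'|^2}{u}\,\nu$ and integrate against $d\nu_p$. Two observations drive the computation. First, the parameter $\kappa$ is tuned so that the nonlinear term is orthogonal to the test function: the two integrations by parts
\begin{equation*}
\izp{(\L u)\,u^\kappa}=-\kappa\izp{u^{\kappa-1}|u'|^2} \quad\mbox{and}\quad \izp{\frac{|u'|^2}{u}\,u^\kappa\,\nu}=\izp{u^{\kappa-1}|u'|^2}
\end{equation*}
show that $\int (\L u + \kappa\,\frac{|u'|^2}{u}\nu)\,u^\kappa\,d\nu_p=0$, so $u^\kappa$ drops out. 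Second, the remaining combination of the building blocks $(\L u)^2$, $(\L u)\,\frac{|u'|^2}{u}\,\nu$ and $\frac{|u'|^4}{u^2}\,\nu^2$ can be reorganised using the two ultraspherical identities of Proposition~\ref{Prop:2id} that already served in the derivation of \eqref{atlanta4}; the value $\beta=\frac{4}{6-p}$ is precisely the one that closes the combination as the perfect square $\bigl(u''-\frac{p+2}{6-p}\frac{|u'|^2}{u}\bigr)^{\!2}\,\nu^2$.

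Putting these two facts together yields the identity
\begin{equation*}
0 = \Bigl(\tfrac{2p}{p-2}-\lambda\,\tfrac{\kappa-1}{\beta}\Bigr)\izp{|u'|^2\,\nu} + \izp{\Bigl|u''-\tfrac{p+2}{6-p}\,\tfrac{|u'|^2}{u}\Bigr|^2\nu^2}.
\end{equation*}
Since $\tfrac{\kappa-1}{\beta}=p-2$, the prefactor equals $(p-2)\bigl(\tfrac{2p}{(p-2)^2}-\lambda\bigr)$, which is strictly positive under the hypothesis $\lambda<\tfrac{2p}{(p-2)^2}$. Both integrals being non-negative and summing to zero, each must vanish; in particular $u'\equiv 0$, so $u$ and hence $f$ is constant. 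I expect the main obstacle to be careful bookkeeping: after the cancellation of the $u^\kappa$ term, the coefficients of the three building blocks have to combine exactly as in~\eqref{atlanta4} for the perfect square to appear, and the sign of the residual $|u'|^2\,\nu$ contribution has to be tracked through the substitution. The exclusion $p\neq 6$ reflects the singularity of $\beta=\frac{4}{6-p}$ at $p=6$; at that value one would instead work directly in the variable $f$, using the singularity-free form of the flow recorded at the end of Section~\ref{flowplargerthan2}.
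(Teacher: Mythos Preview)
Your proposal is correct and follows essentially the same argument as the paper: the substitution $f=u^\beta$ with $\beta=\frac{4}{6-p}$, multiplication of the resulting equation by $\L u+\kappa\,\frac{|u'|^2}{u}\,\nu$ and integration against $d\nu_p$, the cancellation of the $u^\kappa$ term via the same pair of identities, and the use of~\eqref{atlanta4} to produce the perfect square and the residual $|u'|^2\,\nu$ term with coefficient $\frac{2p}{p-2}-\lambda\,\frac{\kappa-1}{\beta}$. Your added remark that $\frac{\kappa-1}{\beta}=p-2$ makes the sign condition on $\lambda$ transparent, and your comment on the role of $p\neq 6$ is accurate.
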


\subsection{The case of a sublinear elliptic equation.}\label{Sec:Rigidity2}

With the notations of Section~\ref{flowplessthan2}, consider the equation
\be{Eqn:Ell2}
-\,\L f-\lambda\,f+f^{p-1}=0
\ee
for $f: (-1,1) \rightarrow \R_+$ and $p\in (1,2)$. Note that this equation is the Euler-Lagrange equation of the functional \eqref{atlanta6}. If $u$ is such that $f=u^\beta$, then we notice that the equation can be rewritten as 
\[
\L u+(\beta-1)\,\frac{|u'|^2}u\,\xi+\frac\lambda \beta\,u-\frac{ u^\kappa}{ \beta}=0\,,
\]
with $\kappa=\beta\,(p-2)+1$. Exactly as in Section~\ref{Sec:Rigidity1}, we notice that
\[
\izpp{(\L u)\,u^\kappa}=-\,\kappa\,\izpp{u^{\kappa-1}\,|u'|^2\,\xi}\quad\mbox{and}\quad\izpp{\frac{|u'|^2}u\,u^\kappa\,\xi}=\izpp{u^{\kappa-1}\,|u'|^2\,\xi}\,,
\]
so that
\[
\izpp{\(\L u+\kappa\,\frac{|u'|^2}u\,\xi\)\,u^\kappa}=0\,.
\]
Hence, we know that
\begin{multline*}
0=\izpp{\(\L u+\kappa\,\frac{|u'|^2}u\,\xi\)\,\(\L u+(\beta-1)\,\frac{|u'|^2}u\,\xi+\frac \lambda \beta\,u\)}\\
=\(\frac{2\,p}{2-p}+\lambda\,\frac{\kappa-1}\beta\) \izpp{|u'|^2\,\xi}+\izpp{\left|u''-\frac{p+2}{6-p}\,\frac{|u'|^2}u\right|^2\xi^2}\,.
\end{multline*}
This proves
\begin{thm} Let $ 1<p<2$ and $f$ a positive solution of the equation \eqref{Eqn:Ell2}. If $\lambda < \frac{2p}{(2-p)^2}$, then $f$ is constant.
\end{thm}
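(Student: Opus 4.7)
The plan is to mirror the strategy used for Theorem~\ref{cor:Rigidity1}, adapting it to the sublinear case with weight $\xi = 1+\z^2$ in place of $\nu = 1-z^2$ and the operator $\L u = \xi\,u'' - \frac{2p}{2-p}\,\z\,u'$ from Section~\ref{flowplessthan2}. I would first perform the substitution $f = u^\beta$ with $\beta = \frac{4}{6-p}$ and set $\kappa = \beta(p-2) + 1$. A direct calculation rewrites~\eqref{Eqn:Ell2} in the form
\[
\L u + (\beta-1)\,\frac{|u'|^2}{u}\,\xi + \frac{\lambda}{\beta}\,u - \frac{u^\kappa}{\beta} = 0.
\]

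Next, I would exploit the same cancellation that appears in Section~\ref{Sec:Rigidity1}: integrating by parts against $u^\kappa$ with respect to $d\xi_p$, one checks that
\[
\izpp{(\L u)\,u^\kappa} = -\kappa\izpp{u^{\kappa-1}\,|u'|^2\,\xi}, \qquad \izpp{\frac{|u'|^2}{u}\,u^\kappa\,\xi} = \izpp{u^{\kappa-1}\,|u'|^2\,\xi},
\]
which together yield $\izpp{\bigl(\L u + \kappa\,\tfrac{|u'|^2}{u}\,\xi\bigr)u^\kappa} = 0$. Using this orthogonality relation to eliminate the $u^\kappa$ term, I would multiply the reformulated equation by $\L u + \kappa\,\tfrac{|u'|^2}{u}\,\xi$ and integrate. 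The two identities established in Section~\ref{flowplessthan2} (the analogues of Proposition~\ref{Prop:2id}) then allow me to expand the cross-term and assemble it, after the choice $\beta = \frac{4}{6-p}$, into the sum-of-squares identity
\[
0 = \Bigl(\frac{2p}{2-p} + \lambda\,\frac{\kappa-1}{\beta}\Bigr)\izpp{|u'|^2\,\xi} + \izpp{\Bigl|u'' - \tfrac{p+2}{6-p}\,\tfrac{|u'|^2}{u}\Bigr|^2\,\xi^2}.
\]

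The final — and most delicate — step is the sign analysis. Since $\kappa - 1 = \beta(p-2)$ and $p<2$, one has $\frac{\kappa-1}{\beta} = p - 2 < 0$, so the coefficient of the first integral is $\frac{2p}{2-p} - \lambda(2-p)$, which is strictly positive precisely when $\lambda < \frac{2p}{(2-p)^2}$. Under this hypothesis, both nonnegative terms must vanish, forcing $u' \equiv 0$ on the support, hence $f = u^\beta$ is constant. The main obstacle I anticipate is this careful tracking of signs: unlike the superlinear case, the relevant exponents $\kappa - 1$ and $\beta - 1$ are negative, and one must also verify that the integrations by parts underlying the two identities from Section~\ref{flowplessthan2} are justified for positive solutions of~\eqref{Eqn:Ell2}, which in particular requires controlling $u$ near the endpoints where the weight $\xi^{-2/(2-p)}$ degenerates.
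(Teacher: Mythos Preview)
Your proposal is correct and follows essentially the same approach as the paper: the paper makes the identical substitution $f=u^\beta$ with $\kappa=\beta(p-2)+1$, uses the same orthogonality relation to kill the $u^\kappa$ term, multiplies by $\L u+\kappa\,\tfrac{|u'|^2}{u}\,\xi$, and arrives at the very same sum-of-squares identity you wrote down. Your sign analysis of the coefficient $\tfrac{2p}{2-p}+\lambda\,\tfrac{\kappa-1}{\beta}=\tfrac{2p}{2-p}-\lambda(2-p)$ is also exactly what is needed, and the paper does not address the boundary/integrability justification you flag any more carefully than you do.
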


\section{Further considerations on flows} \label{Sec:Flows}

This section is devoted to the study of various flows associated with~\eqref{Ineq:duality1} and~\eqref{Ineq:duality2}. As we shall see below, fast diffusion flows with several different exponents are naturally associated with left-hand sides, while the heat flow appears as a gradient flow if we introduce an appropriate notion of distance in~\eqref{Ineq:duality1} for $p\in(2,3)$ and in~\eqref{Ineq:duality2} for $p\in(1,2)$.

\subsection{Fast diffusion flows.}\label{Sec:FDflows} The l.h.s.~in~\eqref{Ineq:duality1} is monotone increasing under the action of the flow associated to the fast diffusion flow
\be{FD}
\partial_tG=\sigma(t)\,\partial_y^2G^m+\partial_y(y\,G)\quad(t,y)\in\R^+\times\R\,,
\ee
where
\[
m=\frac{p+2}{3\,p-2}\;,
\]
and $\sigma(t)$ is adjusted at every $t\ge0$ so that $\frac d{dt}\iRy{G(t,y)\,|y|^2}=0$. The growth rate is determined by the Gagliardo-Nirenberg-Sobolev inequality
\be{GN}
\nrml u{2a}\le C(a)\,\nrml{u'}2^\theta\,\nrml u{a+1}^{1-\theta}\quad\mbox{with}\quad a:=\frac 1{2\,m-1}=\frac{3\,p-2}{6-p}\;,
\ee
which introduces the restriction
\[
2<p<6\quad\Longleftrightarrow\quad a\in(1,\infty)\quad\Longleftrightarrow\quad m\in\(\tfrac 12,1\).
\]
See \cite{1004,1104} for related considerations.

We can also use a more standard framework (see \cite{MR1940370,MR1777035}) as follows. For any $m\in(0,1)$, consider the usual fast diffusion equation in self-similar variables
\[
\partial_tG=\partial_y^2 G^m+\partial_y\cdot(y\,G)\quad(t,y)\in\R^+\times\R
\]
for some $m\in(0,1)$ and define the \emph{generalized entropy} by
\[
\mathcal F_1[G]:=\frac 1{m-1}\iRy{G^m}+\frac 12\iRy{G\,|y|^2}\,.
\]
The equation preserves the mass $M:=\iRy G$ and the entropy converges with an exponential rate towards its asymptotic value which is given by the Barenblatt profile
\[
G_\infty(y)=\(C+\tfrac{1-m}{2\,m}\,|y|^2\)^\frac1{m-1}\quad\forall\,y\in\R
\]
with same mass as the solution, \emph{i.e.}~with $C$ such that $\iRy{G_\infty}=M$. Since
\[
\mathcal F_1[G_\lambda]=\frac{\lambda^{m-1}}{m-1}\iRy{G^m}+\frac{\lambda^{-2}}2\iRy{G\,|y|^2}\quad\mbox{if}\quad G_\lambda(y):=\lambda\,G(\lambda\,y)\,,
\]
an optimization with respect to the parameter $\lambda>0$ shows that
\[
\mathcal F_1[G]\ge\mathcal F_1[G_\lambda]=\(\tfrac12-\tfrac1{1-m}\)\,\(\iRy{G^m}\)^\frac2{1+m}\(\iRy{G\,|y|^2}\)^{-\frac{1-m}{1+m}},
\]
which again shows that the l.h.s.~in~\eqref{Ineq:duality1} (raised to the appropriate exponent and multiplied by some well-defined constant) is the optimal value of $\mathcal F$.

Similarly, the l.h.s.~in~\eqref{Ineq:duality2} is monotone increasing under the action of the flow associated to the fast diffusion flow \eqref{FD} with
\[
m=\frac2{4-p}\;,
\]
and $\sigma(t)$ is again adjusted at every $t\ge0$ so that $\frac d{dt}\iRy{G(t,y)\,|y|^2}=0$. The growth rate is determined by \eqref{GN} with $a=\frac 1{2\,m-1}=\frac4p-1$, $p\in(1,2)$. Alternatively, we can also consider the entropy functional $\mathcal F$ as above.

\subsection{Gradient flows, entropies and distances.}\label{Sec:GradientFlows}

\subsubsection{Case \texorpdfstring{$p\in(1,2)$.}{p in(1,2)}}

Let us start with a simple computation based on the heat equation
\[
\partial_t\rho=\Delta\rho\quad x\in\R^d\,,\;t>0\,.
\]
Since the dimension plays no role, we can simply assume that $d\ge1$. Under appropriate assumptions on the initial datum, the mass $M$ of a non-negative solution is preserved along the evolution: $\frac d{dt}\int_{\R^d}\rho(t,x)\,dx=0$. A standard computation (see for instance \cite{pre05312043}) shows that
\be{Eqn:EntropyProduction}
\frac d{dt}\int_{\R^d}\rho^q\,dx=-\,4\,\frac{q-1}q\int_{\R^d}|\nabla\rho^{q/2}|^2\,dx\,.
\ee
With $f=\rho^{q/2}$, $p=2/q\in(1,2)$ and the Gagliardo-Nirenberg-Sobolev inequality
\[
\nrmR{\nabla f}2^\theta\,\nrmR fp^{1-\theta}\ge\mathsf C_{\mathrm GN}(p,d)\,\nrmR f2\quad\forall\,f\in\mathrm H^1(\R^d)\,,
\]
where $\theta=\frac{d\,(2-q)}{2\,d-q\,(d-2)}$, we find that
\[
\frac d{dt}\int_{\R^d}\rho^q\,dx\le-\,4\,\frac{q-1}q\,\(\frac{\mathsf C_{\mathrm GN}(p,d)}{M^{1-\theta}}\)^\frac1\theta\(\int_{\R^d}\rho^q\,dx\)^\frac1{2\,\theta}\,,
\]
which gives an explicit algebraic rate of decay of the entropy $\int_{\R^d}\rho^q\,dx$.

We will now introduce a notion of \emph{distance} as in \cite{MR2448650}, which is well-adapted to our setting. We refer to \cite{MR2448650} for a rigorous approach and consider the problem at formal level only. First of all one can consider the system
\[
\left\{\begin{array}{l}
\partial_t\rho+\nabla\cdot w=0\\[6pt]
\partial_tw=\Delta w
\end{array}\right.
\]
so that
\be{Eqn:EntropyFlow}
\frac d{dt}\int_{\R^d}\rho^q\,dx=-\,4\,\frac{q-1}q\int_{\R^d}\rho^{q-2}\,\nabla\rho\cdot w\,dx\,.
\ee
Let $\alpha=2-q$ and define the \emph{action functional} as
\[
\mathsf A_\alpha[\rho,w]:=\iRd{\frac{|w|^2}{\rho^\alpha}}\,.
\]
We recall that $\alpha\in(0,1)$ if and only if $q\in(1,2)$ or, equivalently $p=2/q\in(1,2)$. The above flow reduces to the heat flow if $w=-\nabla\rho$. If $\rho_0$ and $\rho_1$ are two probability densities, we can define a distance $d_\alpha$ between $\rho_0$ and $\rho_1$ by
\[
d_\alpha^2(\rho_0,\rho_1):=\inf\left\{\int_0^1\mathsf A_\alpha[\rho_s,w_s]\,ds\,:\,(\rho_s,w_s)\mbox{ is admissible}\right\}\,,
\]
where an admissible path connecting $\rho_0$ to $\rho_1$ is a pair $(\rho_s,w_s)$ parametrized by a coordinate $s$ ranging between $0$ and $1$, so that the endpoint densities are $\rho_{s=0}=\rho_0$ and $\rho_{s=1}=\rho_1$, $w_s$ is vector field and $(\rho_s,w_s)$ satisfies a continuity equation:
\[
\partial_s\rho_s+\nabla\cdot w_s=0\,.
\]
We can also define a notion of \emph{instant velocity} at point $s\in(0,1)$ along a path $(\rho_s)_{0\le s\le1}$ by
\[
\dot{|\rho_s|}^2:=\inf\left\{\mathsf A_\alpha[\rho_s,w]\,:\,\partial_s\rho_s+\nabla\cdot w=0\right\}\,.
\]

Consider now a given path $(\rho_t,w_t)_{t>0}$. Using~\eqref{Eqn:EntropyFlow} and a Cauchy-Schwarz inequality, we know that
\[
-\frac d{dt}\int_{\R^d}\rho_t^q\,dx\le q\,(q-1)\,\sqrt{\mathsf A_\alpha[\rho_t,\nabla\rho_t]\,\mathsf A_\alpha[\rho_t,w_t]}\,,
\]
so that
\[
-\frac d{dt}\int_{\R^d}\rho_t^q\,dx\le q\,(q-1)\,\sqrt{\mathsf A_\alpha[\rho_t,\nabla\rho_t]}\,\dot{|\rho_t|}\,,
\]
if the path is optimal for our notion of distance, \emph{i.e.}~$\dot{|\rho_t|}^2=\mathsf A_\alpha[\rho_t,w_t]$. On the other hand, $w_t=-\nabla\rho_t$ defines an admissible path along the heat flow and in that case we know from~\eqref{Eqn:EntropyProduction} that
\[
-\frac d{dt}\int_{\R^d}\rho_t^q\,dx=q\,(q-1)\,\mathsf A_\alpha[\rho_t,\nabla\rho_t]\,.
\]
If $(\rho_t)_{t>0}$ is the gradient flow of $\int_{\R^d}\rho^q\,dx$ with respect to $d_\alpha$, then on the one hand we have
\[
q\,(q-1)\,\mathsf A_\alpha[\rho_t,\nabla\rho_t]\le-\frac d{dt}\int_{\R^d}\rho_t^q\,dx\,,
\]
and on the other hand, using $w_t=-\nabla\rho_t$ as a test function in the definition of $\dot{|\rho_t|}$, we find that $\dot{|\rho_t|}^2\le\mathsf A_\alpha[\rho_t,\nabla\rho_t]$, thus showing that
\[
\dot{|\rho_t|}^2=\mathsf A_\alpha[\rho_t,\nabla\rho_t]\,.
\]
This is the desired result: the heat equation is the \emph{gradient flow} of $\int_{\R^d}\rho^q\,dx$ with respect to $d_\alpha$ if $q=2/p$ and $\alpha=2-q$.

\subsubsection{Case \texorpdfstring{$p>2$.}{p>2}}

One can consider the system
\[
\left\{\begin{array}{l}
\partial_t\rho+\nabla\cdot w=0\\[6pt]
\partial_tw=\Delta w
\end{array}\right.
\]
so that
\[
\frac d{dt}\int_{\R^d}\rho^\frac p2\,dx=-\,\frac14\,p\,(p-2)\int_{\R^d}\rho^{\frac p2-2}\,\nabla\rho\cdot w\,dx\,.
\]
Since
\[
\left|\;\int_{\R^d}\rho^{\frac p2-2}\,\nabla\rho\cdot w\,dx\;\right|^2\le \mathsf A_\alpha [\rho,w]\,\int_{\R^d}\rho^{p-3}\,|\nabla\rho|^2\,dx\,,
\]
with $\alpha=3-p$, it is rather straightforward to see that the equation
\[
\partial_t\rho=\Delta\rho^{2-\frac p2}
\]
is such that
\[
\frac d{dt}\int_{\R^d}\rho^\frac p2\,dx=-\,\frac18\,p\,(p-2)\,(4-p)\int_{\R^d}\frac{|\nabla\rho|^2}\rho\,dx\,,
\]
and hence can be interpreted as the gradient flow of $\rho\mapsto\int_{\R^d}\rho^\frac p2\,dx$ with respect to the distance~$d_\alpha$ and optimal descent direction given by $w=-\nabla\rho^{2-p/2}$ if $2<p<3$. Recall that conservation of mass holds only if $2-\frac p2>1-\frac 1d$, which is an additional restriction on the range of $p$.

\subsubsection{Comments}

The above gradient flow approaches are formal but can be fully justified. See \cite{Ambrosio-Gigli-Savare05} and \cite{MR2448650}. Difficulties lie in the fact that paths have to be defined on a space of measures (vector valued measures in case of $w$) and various regularizations are needed, as well as reparametrizations of the paths. This approach can also be carried out in self-similar variables (the heat equation has then to be replaced by a Fokker-Planck equation) and provides exponential rates of convergence in relative entropy with respect to the stationary solution, or with respect to the invariant measure if one works in the setting of the Ornstein-Uhlenbeck equation. The gradient flow structure of the equation with respect to some appropriate notion of distance has been studied in  \cite{pre05312043,doi:10.1137/110835190}  and the equivalent of McCann's condition for geodesic convexity of the corresponding functional has been established in~\cite{MR2565840}. 
 The precise connection of Gagliardo-Nirenberg-Sobolev inequalities with W.~Beckner's interpolation inequalities \cite{Beckner89,pre05312043} in case of a Gaussian measure and M.~Agueh's computations in \cite{MR2263417,MR2427077} is still to be done.

As a final remark in this section, let us observe that it is crucial for our approach that the action functional $(\rho,w)\mapsto\mathsf A_\alpha[\rho,w]$ is convex. An elementary computation shows that this implies that $\alpha$ is in the interval $[0,1]$, where for $\alpha=1$ (that is, $q=1$ and $p=2$), the distance $d_1$ corresponds to the usual Wasserstein distance according to the Benamou-Brenier characterization in \cite{Benamou-Brenier00}, while for $\alpha=0$ (that is, $q=2$ and $p=1$), the distance $d_0$ corresponds to the usual $\mathrm H^{-1}$ notion of distance. If we now consider the case $p>3$, the functional $\mathsf A_\alpha$ is no longer convex and, although at a formal level the computations are still the same, it is no longer possible to define a meaningful notion of distance. It is therefore an open question to understand whether there is a notion of gradient flow which is naturally associated to the Gagliardo-Nirenberg-Sobolev inequalities with $p>3$ or not.


\section{Concluding remarks}\label{Sec:Conclusion}

Well-chosen \emph{entropy} functionals are exponentially decreasing under the action of the flow defined by the fast diffusion equation and the optimal rate of decay is given by the best constant in a special family of Gagliardo-Nirenberg inequalities: see for instance \cite{MR1777035,MR1940370,MR2126633,BBDGV,BDGV}. Moreover, self-similar solutions, the so-called Barenblatt functions, are extremal for the inequalities (see \cite{MR1940370,Gunson91}). An explanation for this fact has been given in \cite{MR1842429} by F.~Otto: the fast diffusion equation is the gradient flow of the entropy with respect to the Wasserstein distance while the entropy (at least in some range of the exponent) is displacement convex. This has been exploited by D.~Cordero-Erausquin, B.~Nazaret and C.~Villani in~\cite{MR1842429} in order to provide a proof of the Gagliardo-Nirenberg inequalities associated with fast diffusion using mass transportation techniques. Such a method heavily relies on the explicit knowledge of the Barenblatt functions, as well as the reformulation that was given in \cite{MR2053603}. A striking point of the method of \cite{MR1842429} is a nice duality which relates the Gagliardo-Nirenberg inequalities with a much simpler expression, which again has the Barenblatt functions as optimal functions. 

Not so many interpolation inequalities have explicitly known optimal functions. Among Gagliardo-Nirenberg inequalities, the other well-known families are Nash's inequalities and the family which corresponds to the one-dimensional case. This was observed long ago and M.~Agueh has investigated in \cite{MR2263417,MR2427077} how Barenblatt functions are transformed into optimal functions for the inequalities. We refer to these two papers for an expression of the explicit transport map $\varphi$ in case of optimal functions. In this paper we have focused our attention on the one-dimensional Gagliardo-Nirenberg inequalities and established \emph{duality results} which are analogues to the ones in~\cite{MR1842429}: see Section~\ref{Sec:Duality}. A remarkable fact is that the dual functional is associated in both cases with an entropy corresponding to a fast diffusion equation.

In \cite{pre05312043,MR2448650,doi:10.1137/110835190}, some interpolation inequalities associated with $p<2$ have been studied. We have adapted the methods that can be found there to establish that for some appropriate notion of \emph{distance,} which is not anymore the Wasserstein distance, a notion of \emph{gradient flow} is associated with the Gagliardo-Nirenberg-Sobolev inequalities.

\medskip Now let us summarize some aspects of the present paper before listing intriguing issues concerning flows. We have studied the Gagliardo-Nirenberg-Sobolev interpolation inequalities \eqref{GN1} and \eqref{GN2} using the three strategies mentioned in the introduction:
\begin{enumerate}
\item[(a)] The direct variational approach has been carried out in Appendix~\ref{Sec:AppendixB}, for completeness.
\item[(b)] The flow method has been studied in Section~\ref{Sec:monotonicity}, and summarized in Theorems~\ref{thm:flow1} and~\ref{thm:flow2}; the corresponding rigidity results are stated in Section~\ref{Sec:Rigidity}.
\item[(c)] The duality by mass transportation is the subject of Section~\ref{Sec:Duality}. Optimality has been checked in Section~\ref{Sec:Optimality}.
\end{enumerate}

There is a natural notion of \emph{flow associated with the dual problem} obtained by mass transportation, which is of fast diffusion type; this flow can be seen as a gradient flow with respect to Wasserstein's distance. There is a also notion of \emph{gradient flow} for a well-chosen notion of distance (which is not, in general, Wasserstein's distance), that is studied in Section~\ref{Sec:GradientFlows} and for which optimal rates of decay are given by our Gagliardo-Nirenberg-Sobolev inequalities, but the connection with the mass transportation of Section~\ref{Sec:Duality} is still to be clarified.

Method (b) is in a sense surprising. We select a special optimal function and exhibit another \emph{nonlinear diffusion flow}, which is not translation invariant, that forces the solution with any initial datum to converge for large times to the special optimal function we have chosen. The non-negativity of the associated functional is equivalent to the Gagliardo-Nirenberg-Sobolev inequality and the striking property of the flow is that our functional is monotonously non-increasing. The functional is invariant under translations, and any solution corresponding to a translation of the optimal function returns to the initially chosen optimal function, keeping the functional at its minimal level. This is explained by conformal invariance on the sphere and is anything but trivial. This phenomenon, namely that the functional is invariant under translations (which is the same as conformal invariance in other variables) but nevertheless non-increasing under the flow that converges to a single function is at the heart of the competing symmetry approach by E.~Carlen and M.~Loss in \cite{MR1038450}. How this last flow is connected with the other ones is also an open question. At least the computation that shows why the functional decays along the flow clarifies a bunch of existing computations for proving rigidity results for nonlinear elliptic equations written on $d$-dimensional spheres and for the ultraspherical operator.

\appendix\section{Two useful identities}\label{Sec:AppendixA}

On the real interval $\Omega$, let us consider the measure $\nub=\nu^\bb\,dx$ for some positive function $\nu$ on $\Omega$. We consider the space $\mathrm L^2(\Omega,\nub)$ endowed with the scalar product
\[
\scal{f_1}{f_2}=\ixb{f_1\,f_2}\,.
\]
On $\mathrm L^2(\Omega,\nub)$, we define the self-adjoint operator
\[
\Lab f:=\nu^\aa\,f''+\frac{\aa+\bb}\aa\,(\nu^\aa)'\,f'
\]
which satisfies the identity
\[
\scal{f_1}{\Lab f_2}=-\ixb{f_1'\,f_2'\,\nu^\aa}\;.
\]
This identity determines the domain of $\Lab$. We will now establish two useful identities.
\begin{prop}\label{Prop:2id} Assume that $\aa$ and $\bb$ are two reals numbers with $\aa\neq0$ and consider a smooth positive function $u$ which is compactly supported in $\Omega$. With the above notations, we have
\[
\ixb{(\Lab u)^2}=\ixmu{|u''|^2}{2\aa+\bb}-\frac{\aa+\bb}\aa\,\ixb{\nu^\aa\,(\nu^\aa)''\,|u'|^2}
\]
and
\[
\ixb{(\Lab u)\,\frac{|u'|^2}u\,\nu^\aa}=\frac{\aa+\bb}{2\,\aa+\bb}\ixb{\frac{|u'|^4}{u^2}\,\nu^{2\aa}}-\frac{\aa+2\,\bb}{2\,\aa+\bb}\ixb{u''\,\frac{|u'|^2}u\,\nu^{2\aa}}\,.
\]
\end{prop}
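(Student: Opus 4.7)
\medskip\noindent\emph{Proof plan.} Both identities are proved by expanding the square or the product on the left, then integrating by parts once to absorb the first-order term into a divergence-type expression. Since $u$ is compactly supported in $\Omega$, all boundary terms vanish. The main technical ingredient, used repeatedly, is the elementary derivative identity
\[
(\nu^{\aa+\bb})'=\frac{\aa+\bb}\aa\,\nu^\bb\,(\nu^\aa)'\,,\qquad\nu^{\aa+\bb}\,(\nu^\aa)'=\frac{\aa}{2\,\aa+\bb}\,(\nu^{2\,\aa+\bb})'\,,
\]
which allows one to trade a factor of $(\nu^\aa)'$ for a total derivative with the appropriate power of $\nu$.

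For the first identity, I would expand
\[
(\Lab u)^2=\nu^{2\aa}\,|u''|^2+2\,\tfrac{\aa+\bb}\aa\,\nu^\aa\,(\nu^\aa)'\,u''\,u'+\(\tfrac{\aa+\bb}\aa\)^2\,((\nu^\aa)')^2\,|u'|^2
\]
and integrate against $\nu^\bb\,dx$. The first summand is $\ixmu{|u''|^2}{2\aa+\bb}$ by definition of $\mu_{2\aa+\bb}$. In the cross term I would write $u''\,u'=\tfrac12\,(|u'|^2)'$ and integrate by parts, yielding
\[
\ixb{\tfrac{\aa+\bb}\aa\,\nu^\aa\,(\nu^\aa)'\,(|u'|^2)'}=-\,\tfrac{\aa+\bb}\aa\int_\Omega\bigl(\nu^{\aa+\bb}(\nu^\aa)'\bigr)'\,|u'|^2\,dx\,.
\]
Using $(\nu^{\aa+\bb}(\nu^\aa)')'=\tfrac{\aa+\bb}\aa\,\nu^\bb\,((\nu^\aa)')^2+\nu^{\aa+\bb}\,(\nu^\aa)''$ from the derivative identity above, the first resulting term exactly cancels the last summand in the expansion of $(\Lab u)^2$, and what remains is $-\tfrac{\aa+\bb}\aa\ixb{\nu^\aa(\nu^\aa)''\,|u'|^2}$.

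For the second identity, I would split
\[
\ixb{(\Lab u)\tfrac{|u'|^2}u\,\nu^\aa}=\int_\Omega\nu^{2\aa+\bb}\,u''\,\tfrac{|u'|^2}u\,dx+\tfrac{\aa+\bb}\aa\int_\Omega\nu^{\aa+\bb}\,(\nu^\aa)'\,\tfrac{(u')^3}u\,dx.
\]
In the second integral I would use $\nu^{\aa+\bb}(\nu^\aa)'=\tfrac{\aa}{2\aa+\bb}(\nu^{2\aa+\bb})'$ and integrate by parts, producing
\[
-\,\tfrac{\aa}{2\aa+\bb}\int_\Omega\nu^{2\aa+\bb}\left[\tfrac{3\,u''\,|u'|^2}u-\tfrac{|u'|^4}{u^2}\right]dx\,.
\]
Collecting the $u''|u'|^2/u$-terms with coefficients $1$ and $-\tfrac{3(\aa+\bb)}{2\aa+\bb}$ gives the coefficient $-\tfrac{\aa+2\bb}{2\aa+\bb}$, while the $|u'|^4/u^2$-term acquires the coefficient $\tfrac{\aa+\bb}{2\aa+\bb}$, and after rewriting $\nu^{2\aa+\bb}\,dx=\nu^{2\aa}\,d\mu_\bb$ one obtains the claimed expression.

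The main obstacle is purely bookkeeping: one has to be careful that the derivative identity for $\nu^{\aa+\bb}$ is applied with the correct power, and that the expansion of $(|u'|^2)'$ (resp.\ $((u')^3/u)'$) is carried out without sign errors so that the cancellations producing $-\tfrac{\aa+\bb}\aa\,\nu^\aa(\nu^\aa)''$ (resp.\ $-\tfrac{\aa+2\bb}{2\aa+\bb}$) come out correctly. There is no analytic difficulty, since $u$ is compactly supported and $\nu>0$, so every integration by parts is justified and all boundary contributions vanish.
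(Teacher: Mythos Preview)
Your proposal is correct. Both identities follow from a single integration by parts after the expansions you wrote, and your bookkeeping of the coefficients (in particular the cancellation yielding $-\tfrac{\aa+\bb}\aa\,\nu^\aa(\nu^\aa)''$ in the first identity and the combination $1-\tfrac{3(\aa+\bb)}{2\aa+\bb}=-\tfrac{\aa+2\bb}{2\aa+\bb}$ in the second) is accurate.

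The paper organizes the same computation differently. For the first identity it does not expand the square; instead it uses the self-adjointness $\ixb{(\Lab u)^2}=-\ixb{\nu^\aa u'(\Lab u)'}$ together with the commutator $[\tfrac d{dx},\Lab]u=(\nu^\aa)'u''+\tfrac{\aa+\bb}\aa(\nu^\aa)''u'$, then integrates by parts once more. For the second identity the paper integrates by parts on the \emph{other} term: it rewrites $u''|u'|^2=\tfrac13(|u'|^2u')'$, obtains an expression for $\ixb{\tfrac{|u'|^2u'}u(\nu^\aa)'\nu^\aa}$ in terms of the two target integrals, and then substitutes into the definition of $\Lab$. Your route is a direct expand-and-collect argument relying on the power-rule identities for $\nu$; the paper's route is more structural, leaning on the operator formalism (self-adjointness, commutator) that underlies the Bakry--\'Emery framework used elsewhere in the paper. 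Both are of comparable length; the paper's organization makes the link with the ultraspherical $\Gamma_2$-calculus more visible, while yours is slightly more self-contained.
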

\begin{proof} As a preliminary observation, we can observe that
\[
\left[\frac d{dx},\Lab\right]f=(\nu^\aa)'\,f''+\frac{\aa+\bb}\aa\,(\nu^\aa)''\,f'\,,
\]
so that we immediately get
\begin{eqnarray*}
\ixb{(\Lab u)^2}&=&-\ixb{\nu^\aa\,u'\,(\Lab u)'}\\
&=&-\ixb{\nu^\aa\,u'\,(\Lab u')}-\ixb{\nu^\aa\,u'\,\left[(\nu^\aa)'\,u''+\frac{\aa+\bb}\aa\,(\nu^\aa)''\,u'\right]}\\
&=&\ixb{(\nu^\aa\,u')'\,\nu^\aa\,u''}-\ixb{\nu^\aa\,u'\,\left[(\nu^\aa)'\,u''+\frac{\aa+\bb}\aa\,(\nu^\aa)''\,u'\right]}\\
&=&\ixmu{|u''|^2}{2\aa+\bb}-\frac{\aa+\bb}\aa\,\ixb{\nu^\aa\,(\nu^\aa)''\,|u'|^2}\,.
\end{eqnarray*}
On the other hand, using an integration by parts, we notice that
\begin{multline*}
\ixb{u''\,\frac{|u'|^2}u\,\nu^{2\aa}}=\frac 13\ixb{(|u'|^2\,u')'\,\frac{\nu^{2\aa}}u}\\
=\frac 13\ixb{\frac{|u'|^4}{u^2}\,\nu^{2\aa}}-\frac{2\,\aa+\bb}{3\,\aa}\ixb{\frac{|u'|^2\,u'}u\,(\nu^{\aa})'\,\nu^{\aa}}\,,
\end{multline*}
thus proving that
\[
\ixb{\frac{|u'|^2\,u'}u\,(\nu^{\aa})'\,\nu^{\aa}}=-\frac{3\,\aa}{2\,\aa+\bb}\ixb{u''\,\frac{|u'|^2}u\,\nu^{2\aa}}+\frac\aa{2\,\aa+\bb}\ixb{\frac{|u'|^4}{u^2}\,\nu^{2\aa}}\,.
\]
Using the definition of $\Lab$, we have
\[
\ixb{(\Lab u)\,\frac{|u'|^2}u\,\nu^\aa}=\ixb{\(\nu^\aa\,u''+\frac{\aa+\bb}\aa\,(\nu^\aa)'\,u'\)\,\frac{|u'|^2}u\,\nu^\aa}\,,
\]
thus concluding the proof.\end{proof}

From a practical point of view, we will apply Proposition~\ref{Prop:2id} either to $\Omega=(-1,1)$ and $\nu(x):=1-x^2$, or to $\Omega=\R$ and $\nu(x):=1+x^2$.

\section{The direct variational approach}\label{Sec:AppendixB}

For completeness, let us give a statement on optimality in \eqref{GN1} and \eqref{GN2} according to the approach (a) of the introduction. Let us start with the case $p\in(2,\infty)$. We recall that the inequality \eqref{GN1} can be written as
\[
\eqref{GN1}\hspace*{2cm}\nrml fp\le\mathsf C_{\rm GN}(p)\,\nrml{f'}2^\theta\,\nrml f2^{1-\theta}\quad\forall\,f\in\H^1(\R)\,.\hspace*{2cm}
\]
with $\theta=\frac{p-2}{2\,p}$. By standard results of the concentration-compactness method (see for instance \cite{778970,778974}), there exists an optimal function $f$ for \eqref{GN1}. Because of the homogeneity, $\nrml fp$ can be chosen arbitrarily and then, up to a scaling, it is straightforward to check that $f$ can be chosen in order to solve
\be{EL1}
-\,(p-2)^2\,f''+4\,f-\,2\,p\,|f|^{p-2}\,f=0\,.
\ee
A special solution is given by
\[
f_\star(x)=\frac 1{(\cosh x)^\frac2{p-2}}\quad\forall\,x\in\R\,.
\]
\begin{prop}\label{Prop:MinGN1} Assume that $p\in(2,\infty)$. For any optimal function $f$ in \eqref{GN1}, there exists $(\lambda,\mu,x_0)\in\R\times(0,\infty)\times\R$ such that
\[
f(x)=\lambda\,f_\star\big(\mu\,(x-x_0)\big)\quad\forall\,x\in\R\,.
\]
\end{prop}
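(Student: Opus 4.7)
The plan is to reduce optimality to the explicit ODE \eqref{EL1}, use a first integral to cut the order, and then integrate by quadrature.

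First I would note that the functional $f\mapsto\nrml{f'}2^\theta\,\nrml f2^{1-\theta}/\nrml fp$ is invariant under the three-parameter group $f(\cdot)\mapsto\lambda\,f(\mu(\cdot-x_0))$ with $\lambda\neq 0$, $\mu>0$, $x_0\in\R$ (a direct homogeneity check using $\theta=(p-2)/(2p)$). Given any optimizer $f$, I would replace it by $|f|$, which preserves all the relevant norms and is therefore also optimal, to assume $f\ge0$; since the resulting critical point is a strictly positive function (see below), the original $f$ must in fact have constant sign, so nothing is lost. The Euler-Lagrange equation takes the form $-a\,f''+b\,f=c\,f^{p-1}$ with positive constants $a,b,c$, and since I still have two degrees of freedom in $(\lambda,\mu)$ I can define $\hat f(x):=\lambda\,f(\mu x)$ with $(\lambda,\mu)$ chosen so that $\hat f$ satisfies exactly \eqref{EL1}, namely
\[
-(p-2)^2\,\hat f''+4\,\hat f=2\,p\,\hat f^{p-1}.
\]
Elliptic bootstrap gives $\hat f\in C^2(\R)$, and the Sobolev embedding $\H^1(\R)\hookrightarrow C_0(\R)$ gives $\hat f(x)\to 0$ as $|x|\to\infty$. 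Strict positivity of $\hat f$ follows from ODE uniqueness: if $\hat f(x_*)=\hat f'(x_*)=0$ then $\hat f\equiv 0$, contradicting non-triviality.

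Multiplying \eqref{EL1} by $\hat f'$ and integrating produces the first integral
\[
E:=-\tfrac12\,(p-2)^2\,(\hat f')^2+2\,\hat f^2-2\,\hat f^p,
\]
which must vanish identically because $\hat f,\hat f'\to 0$ at $\pm\infty$. The resulting identity
\[
(p-2)^2\,(\hat f')^2=4\,\hat f^2\,(1-\hat f^{p-2})
\]
forces $0\le\hat f\le 1$, and at any maximum point $x_0$ gives $\hat f(x_0)=1$. The substitution $u:=\hat f^{(p-2)/2}$ then turns $(p-2)\,\hat f'=\pm\,2\,\hat f\,\sqrt{1-\hat f^{p-2}}$ into the elementary separable equation
\[
\frac{du}{u\,\sqrt{1-u^2}}=\pm\,dx,
\]
whose primitive is $\mathrm{arcsech}\,u=\mp(x-x_0)$. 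Hence $u=1/\cosh(x-x_0)$ and therefore $\hat f(x)=(\cosh(x-x_0))^{-2/(p-2)}=f_\star(x-x_0)$.

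Unwinding the rescaling $\hat f(x)=\lambda\,f(\mu x)$ yields the announced representation $f(y)=\lambda^{-1}\,f_\star\!\bigl(\mu^{-1}(y-\mu x_0)\bigr)$, i.e.~the statement of the proposition with new parameters $(\lambda^{-1},\mu^{-1},\mu x_0)$. The main obstacle is the bookkeeping of the normalization step: one has to verify that the two positive unknowns $\mu^2$ and $\lambda^{p-2}$ can indeed be chosen to realize simultaneously the two coefficient ratios $a:b=(p-2)^2:4$ and $b:c=4:2\,p$ required by \eqref{EL1}. This amounts to solving two scalar equations in two positive unknowns, which is straightforward because $a,b,c>0$. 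Once this normalization is in place, the remainder is elementary ODE analysis, with the change of unknown $u=\hat f^{(p-2)/2}$ making the quadrature essentially automatic.
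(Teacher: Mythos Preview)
Your proof is correct and follows essentially the same route as the paper: normalize via the scaling/dilation group to reduce to \eqref{EL1}, derive the first integral by multiplying by $\hat f'$, use decay at infinity to force it to vanish, and read off the maximum value $\hat f(x_0)=1$. The only difference is in the last step: the paper simply observes that $\hat f(x_0)=1$, $\hat f'(x_0)=0$ determine the solution uniquely by Cauchy--Lipschitz, whereas you perform the explicit quadrature via $u=\hat f^{(p-2)/2}$; both are valid and lead to the same conclusion.
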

\begin{proof} Because of the scaling invariance and the homogeneity in \eqref{GN1}, it is enough to prove that $f_\star$ is the unique solution of~\eqref{EL1}. Since $f\in\H^1(\R)$, we also know that $f$ and $f'$ are exponentially decaying as $|x|\to+\infty$. By multiplying \eqref{EL1} by $f'$ and integrating from $-\infty$ to $x$, we find that
\[
\mathrm E[f]=\frac 12\,(p-2)^2\,|f'|^2+2\,|f|^2-\,2\,|f|^p
\]
does not depend on $x$. On the other hand, taking into account the limits as $|x|\to+\infty$, we know that $\mathrm E[f]=0$. Let $x_0\in\R$ be such that $|f(x_0)|=\max_\R|f|$. Up to translation, we may assume that $x_0=0$, so that $f'(0)=0$ and $0=\mathrm E[f]=2\(|f(0)|^2-\,|f(0)|^p\)$, thus proving that $f(0)=\pm1$. By the Cauchy-Lipschitz theorem, there exists therefore a unique solution $f$ to~\eqref{EL1} which attains its maximum at $x=0$ and hence we get that $f=f_\star$.\end{proof}

Now let us consider the case $p\in(1,2)$ and turn our attention to \eqref{GN2}. We recall that the inequality \eqref{GN2} can be written as
\[
\eqref{GN2}\hspace*{2cm}\nrml f2\le\mathsf C_{\rm GN}(p)\,\nrml{f'}2^\eta\,\nrml fp^{1-\eta}\quad\forall\,f\in\H^1(\R)\,,\hspace*{2cm}
\]
with $\eta=\frac{2-p}{2+p}$. By standard results of the concentration-compactness method again, there exists an optimal function $f$ for \eqref{GN2}. Because of the homogeneity, $\nrml fp$ can be chosen arbitrarily and then, up to a scaling, it is straightforward to check that $f$ can be chosen in order to solve
\be{EL2}
-\,(2-p)^2\,f''-4\,f+\,2\,p\,|f|^{p-2}\,f=0\,.
\ee
A special solution is given by
\[
f_*(x)=(\cos x)^\frac2{2-p}\quad\forall\,x\in\left[-\tfrac\pi2,\tfrac\pi2\right]\,,\quad f_*(x)=0\quad\forall\,x\in\R\setminus\left[-\tfrac\pi2,\tfrac\pi2\right]\,.
\]
Moreover by the \emph{compact support principle} (see \cite{MR0390473} and \cite{MR1715341,MR768629} for more recent developments), we know that any solution of \eqref{EL2} in $\H^1(\R)$ has compact support.
\begin{prop}\label{Prop:MinGN2} Assume that $p\in(1,2)$. For any optimal function $f$ in \eqref{GN2}, there exists $(\lambda,\mu,x_0)\in\R\times(0,\infty)\times\R$ such that
\[
f(x)=\lambda\,f_*\big(\mu\,(x-x_0)\big)\quad\forall\,x\in\R\,.
\]
\end{prop}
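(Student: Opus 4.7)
The plan is to mirror the argument of Proposition~\ref{Prop:MinGN1} closely, adapting the three main ingredients (existence of an optimizer, first integral of the Euler--Lagrange ODE, and uniqueness by Cauchy--Lipschitz) to the sublinear regime $p\in(1,2)$. The two features that must be handled differently from the proof for $p>2$ are that solutions are no longer exponentially decaying at infinity but compactly supported, and that the nonlinearity $|f|^{p-2}f$ fails to be Lipschitz at $f=0$.

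First I would invoke concentration-compactness to produce an optimizer $f\in\H^1(\R)$ for \eqref{GN2}, and rescale so that $f$ solves \eqref{EL2}. Replacing $f$ by $|f|$ and then by its symmetric decreasing rearrangement, which preserves $\nrml f2$ and $\nrml fp$ and does not increase $\nrml{f'}2$ by the P\'olya--Szeg\H{o} inequality, I may assume $f$ is nonnegative, even, and nonincreasing on $[0,\infty)$. The compact support principle then yields $R>0$ such that $\mathrm{supp}\,f=[-R,R]$, with $f>0$ on $(-R,R)$ and $f(\pm R)=0$ by continuity.

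Multiplying \eqref{EL2} by $f'$ and integrating on $(-R,R)$ shows that the first integral
\[
E[f]:=\tfrac12\,(2-p)^2\,|f'|^2+2\,|f|^2-2\,|f|^p
\]
is constant on that interval. The key step is to establish $E[f]\equiv 0$: since the extension of $f$ by zero outside $[-R,R]$ must still satisfy \eqref{EL2} distributionally on the whole line, no Dirac mass can appear in $f''$ at $\pm R$, and this forces $f'(\pm R)=0$. Combined with $f(\pm R)=0$, this fixes the constant to zero. Evaluating at the unique maximum $x=0$, where $f'(0)=0$, then yields $f(0)^2=f(0)^p$, hence $f(0)=1$.

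It remains to identify $f$. On the open set $\{f>0\}=(-R,R)$ the equation \eqref{EL2} has smooth coefficients, so the Cauchy--Lipschitz theorem applied to the first-order system associated to $(f,f')$ with data $(1,0)$ at $x=0$ forces $f\equiv f_*$ on the maximal positivity interval, and since $f_*$ vanishes exactly at $\pm\pi/2$ I conclude $R=\pi/2$. Undoing the initial normalizations produces the full family with parameters $(\lambda,\mu,x_0)$. The main obstacle, which has no counterpart in the proof of Proposition~\ref{Prop:MinGN1}, is precisely the justification of $f'(\pm R)=0$ at the boundary of the support: it substitutes for the exponential-decay argument at infinity and requires a careful distributional analysis in view of the singular behavior of the nonlinearity at $f=0$.
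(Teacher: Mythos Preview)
Your approach is correct in spirit but differs from the paper's in one key step, and it carries a small gap that you should be aware of.

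\textbf{Comparison.} The paper does not rearrange. Instead, it introduces the functional
\[
\mathcal G[f]=\iR{|f'|^2}+\iR{|f|^p}-\mathsf C\Big(\iR{|f|^2}\Big)^{\frac{p+2}{6-p}},
\]
chosen so that the minimum is $0$, and observes that the exponent $\tfrac{p+2}{6-p}<1$ makes the last term strictly concave as a function of $\iR{|f|^2}$. If the support of an optimizer $f$ split into several disjoint intervals carrying pieces $f_i$, concavity would give $\sum_i\mathcal G[f_i]<\mathcal G[f]=0$, contradicting optimality of each piece. This yields connectedness of the support directly from the variational structure, without rearrangement. From there the paper proceeds exactly as you do: first integral, $f(0)=\pm1$, Cauchy--Lipschitz. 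Your rearrangement route is perfectly legitimate and perhaps more familiar; the paper's concavity argument is shorter and avoids invoking P\'olya--Szeg\H{o}.

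\textbf{The gap.} The statement asks you to classify \emph{every} optimizer, but the phrase ``I may assume'' after rearrangement only shows that the rearranged function equals $f_*$. To recover the original $f$, you must invoke the equality case in P\'olya--Szeg\H{o} (Brothers--Ziemer): since $f$ and its rearrangement $f^\sharp$ are both optimizers with identical $\mathrm L^2$ and $\mathrm L^p$ norms, one has $\nrml{f'}2=\nrml{(f^\sharp)'}2$, and because $f^\sharp=f_*$ has no flat parts this forces $f$ to be a translate of $f^\sharp$. Without this step your argument identifies one optimizer, not all of them. The paper's concavity argument sidesteps this issue entirely, since it works directly on the given optimizer.

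On the positive side, your distributional justification of $f'(\pm R)=0$ is more explicit than the paper's, which simply asserts that $\mathrm E[f]$ is constant and equal to zero without commenting on the boundary behaviour.
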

\begin{proof} Because of the homogeneity and of the scale invariance, finding an optimal function for \eqref{GN2} is equivalent to minimizing the functional
\[
f\mapsto\mathcal G[f]:=\iR{|f'|^2}+\iR{|f|^p}-\mathsf C\(\iR{|f|^2}\)^\frac{p+2}{6-p}
\]
for some appropriately chosen positive constant $\mathsf C$. A unique value of $\mathsf C$ can indeed be found and computed in terms of $\mathsf C_{\rm GN}(p)$ so that the minimum of $\mathcal G$ is achieved and equal to $0$. Let $f$ be the minimizer and assume that $f=\sum_{i\ge1} f_i$ where $(f_i)_{i\ge1}$ is a family of functions with disjoint compact supports made of bounded intervals. Assume that the number of intervals is larger than $1$. Since $\frac{p+2}{6-p}<1$, by concavity we get that
\[ 
\sum_{i\ge1}\mathcal G[f_i]<\mathcal G[f]=0 
\]
a contradiction. This proves that the support of $f$ is made of a single interval. Then the proof goes as in the case $p>2$. By considering $\mathrm E[f]=\frac 12\,(2-p)^2\,|f'|^2-2\,|f|^2+\,2\,|f|^p$ which again does not depend on $x$, we get that at its maximum (assumed to be achieved at $x=0$), we have $f(0)=\pm1$ and conclude again using a uniqueness argument deduced from the Cauchy-Lipschitz theorem that $f=f_*$.\end{proof}

\bigskip\noindent{\sl\small \copyright~2013 by the authors. This paper may be reproduced, in its entirety, for non-commercial purposes.}
\bibliographystyle{siam}
\bibliography{References}

\begin{thebibliography}{10}

\bibitem{MR2263417}
{\sc M.~Agueh}, {\em Sharp {G}agliardo-{N}irenberg inequalities and mass
  transport theory}, J. Dynam. Differential Equations, 18 (2006),
  pp.~1069--1093.

\bibitem{MR2427077}
{\sc M.~Agueh}, {\em Gagliardo-{N}irenberg inequalities involving the gradient
  {$L\sp 2$}-norm}, C. R. Math. Acad. Sci. Paris, 346 (2008), pp.~757--762.

\bibitem{MR2053603}
{\sc M.~Agueh, N.~Ghoussoub, and X.~Kang}, {\em Geometric inequalities via a
  general comparison principle for interacting gases}, Geom. Funct. Anal., 14
  (2004), pp.~215--244.

\bibitem{Ambrosio-Gigli-Savare05}
{\sc L.~Ambrosio, N.~Gigli, and G.~Savar{\'e}}, {\em Gradient flows in metric
  spaces and in the space of probability measures}, Lectures in Mathematics ETH
  Z\"urich, Birkh\"auser Verlag, Basel, 2005.

\bibitem{MR772092}
{\sc D.~Bakry and M.~{\'E}mery}, {\em Hypercontractivit\'e de semi-groupes de
  diffusion}, C. R. Acad. Sci. Paris S\'er. I Math., 299 (1984), pp.~775--778.

\bibitem{MR889476}
{\sc D.~Bakry and M.~{\'E}mery}, {\em Diffusions hypercontractives}, in
  S\'eminaire de probabilit\'es, {XIX}, 1983/84, vol.~1123 of Lecture Notes in
  Math., Springer, Berlin, 1985, pp.~177--206.

\bibitem{MR808640}
{\sc D.~Bakry and M.~{\'E}mery}, {\em In\'egalit\'es de {S}obolev pour un
  semi-groupe sym\'etrique}, C. R. Acad. Sci. Paris S\'er. I Math., 301 (1985),
  pp.~411--413.

\bibitem{Beckner89}
{\sc W.~Beckner}, {\em A generalized {P}oincar\'e inequality for {G}aussian
  measures}, Proc. Amer. Math. Soc., 105 (1989), pp.~397--400.

\bibitem{MR1230930}
\leavevmode\vrule height 2pt depth -1.6pt width 23pt, {\em Sharp {S}obolev
  inequalities on the sphere and the {M}oser-{T}rudinger inequality}, Ann. of
  Math. (2), 138 (1993), pp.~213--242.

\bibitem{Benamou-Brenier00}
{\sc J.-D. Benamou and Y.~Brenier}, {\em A computational fluid mechanics
  solution to the {M}onge-{K}antorovich mass transfer problem}, Numer. Math.,
  84 (2000), pp.~375--393.

\bibitem{MR0390473}
{\sc P.~Benilan, H.~Brezis, and M.~G. Crandall}, {\em A semilinear equation in
  {$L\sp{1}(R\sp{N})$}}, Ann. Scuola Norm. Sup. Pisa Cl. Sci. (4), 2 (1975),
  pp.~523--555.

\bibitem{MR1231419}
{\sc A.~Bentaleb}, {\em In\'egalit\'e de {S}obolev pour l'op\'erateur
  ultrasph\'erique}, C. R. Acad. Sci. Paris S\'er. I Math., 317 (1993),
  pp.~187--190.

\bibitem{MR1134481}
{\sc M.-F. Bidaut-V{\'e}ron and L.~V{\'e}ron}, {\em Nonlinear elliptic
  equations on compact {R}iemannian manifolds and asymptotics of {E}mden
  equations}, Invent. Math., 106 (1991), pp.~489--539.

\bibitem{BBDGV}
{\sc A.~Blanchet, M.~Bonforte, J.~Dolbeault, G.~Grillo, and J.-L. V\'azquez},
  {\em Asymptotics of the fast diffusion equation via entropy estimates},
  Archive for Rational Mechanics and Analysis, 191 (2009), pp.~347--385.

\bibitem{BDGV}
{\sc M.~Bonforte, J.~Dolbeault, G.~Grillo, and J.~L. V{\'a}zquez}, {\em {Sharp
  rates of decay of solutions to the nonlinear fast diffusion equation via
  functional inequalities}}, Proceedings of the National Academy of Sciences,
  107 (2010), pp.~16459--16464.

\bibitem{MR1038450}
{\sc E.~A. Carlen and M.~Loss}, {\em Extremals of functionals with competing
  symmetries}, J. Funct. Anal., 88 (1990), pp.~437--456.

\bibitem{MR1230297}
\leavevmode\vrule height 2pt depth -1.6pt width 23pt, {\em Sharp constant in
  {N}ash's inequality}, Internat. Math. Res. Notices,  (1993), pp.~213--215.

\bibitem{MR2565840}
{\sc J.~A. Carrillo, S.~Lisini, G.~Savar{\'e}, and D.~Slep{\v{c}}ev}, {\em
  Nonlinear mobility continuity equations and generalized displacement
  convexity}, J. Funct. Anal., 258 (2010), pp.~1273--1309.

\bibitem{MR1777035}
{\sc J.~A. Carrillo and G.~Toscani}, {\em Asymptotic {$\mathrm L^1$}-decay of
  solutions of the porous medium equation to self-similarity}, Indiana Univ.
  Math. J., 49 (2000), pp.~113--142.

\bibitem{Cordero00}
{\sc D.~Cordero-Erausquin}, {\em Some applications of mass transport to
  {G}aussian-type inequalities}, Arch. Ration. Mech. Anal., 161 (2002),
  pp.~257--269.

\bibitem{MR2032031}
{\sc D.~Cordero-Erausquin, B.~Nazaret, and C.~Villani}, {\em A
  mass-transportation approach to sharp {S}obolev and {G}agliardo-{N}irenberg
  inequalities}, Adv. Math., 182 (2004), pp.~307--332.

\bibitem{MR1103113}
{\sc E.~B. Davies}, {\em Heat kernels and spectral theory}, vol.~92 of
  Cambridge Tracts in Mathematics, Cambridge University Press, Cambridge, 1990.

\bibitem{MR1940370}
{\sc M.~Del~Pino and J.~Dolbeault}, {\em Best constants for
  {G}agliardo-{N}irenberg inequalities and applications to nonlinear
  diffusions}, J. Math. Pures Appl. (9), 81 (2002), pp.~847--875.

\bibitem{MR2381156}
{\sc J.~Demange}, {\em Improved {G}agliardo-{N}irenberg-{S}obolev inequalities
  on manifolds with positive curvature}, J. Funct. Anal., 254 (2008),
  pp.~593--611.

\bibitem{MR2126633}
{\sc J.~Denzler and R.~J. McCann}, {\em Fast diffusion to self-similarity:
  complete spectrum, long-time asymptotics, and numerology}, Arch. Ration.
  Mech. Anal., 175 (2005), pp.~301--342.

\bibitem{DEKL}
{\sc J.~Dolbeault, M.~J. Esteban, M.~Kowalczyk, and M.~Loss}, {\em Sharp
  interpolation inequalities on the sphere : new methods and consequences},
  Chin. Ann. Math. Series B, 34 (2013), pp.~1--14.

\bibitem{dolbeault:hal-00784887}
{\sc J.~Dolbeault, M.~J. Esteban, and M.~Loss}, {\em Nonlinear flows and
  rigidity results on compact manifolds}.
\newblock Preprint hal-00784887, 2013.

\bibitem{pre05312043}
{\sc J.~Dolbeault, B.~Nazaret, and G.~Savar\'e}, {\em {On the Bakry-Emery
  criterion for linear diffusions and weighted porous media equations.}},
  Commun. Math. Sci., 6 (2008), pp.~477--494.

\bibitem{MR2448650}
{\sc J.~Dolbeault, B.~Nazaret, and G.~Savar{\'e}}, {\em A new class of
  transport distances between measures}, Calc. Var. Partial Differential
  Equations, 34 (2009), pp.~193--231.

\bibitem{doi:10.1137/110835190}
{\sc J.~Dolbeault, B.~Nazaret, and G.~Savar{\'e}}, {\em From {P}oincar{\'e} to
  logarithmic {S}obolev inequalities: A gradient flow approach}, SIAM Journal
  on Mathematical Analysis, 44 (2012), pp.~3186--3216.

\bibitem{1004}
{\sc J.~Dolbeault and G.~Toscani}, {\em Fast diffusion equations: matching
  large time asymptotics by relative entropy methods}, Kinetic and Related
  Models, 4 (2011), pp.~701--716.

\bibitem{1104}
{\sc J.~Dolbeault and G.~Toscani}, {\em Improved interpolation inequalities,
  relative entropy and fast diffusion equations}, Annales de l'Institut Henri
  Poincare (C) Non Linear Analysis, 30 (2013), pp.~917 -- 934.

\bibitem{MR0102740}
{\sc E.~Gagliardo}, {\em Propriet\`a di alcune classi di funzioni in pi\`u
  variabili}, Ricerche Mat., 7 (1958), pp.~102--137.

\bibitem{MR0109295}
\leavevmode\vrule height 2pt depth -1.6pt width 23pt, {\em Ulteriori
  propriet\`a di alcune classi di funzioni in pi\`u variabili}, Ricerche Mat.,
  8 (1959), pp.~24--51.

\bibitem{MR615628}
{\sc B.~Gidas and J.~Spruck}, {\em Global and local behavior of positive
  solutions of nonlinear elliptic equations}, Comm. Pure Appl. Math., 34
  (1981), pp.~525--598.

\bibitem{MR0420249}
{\sc L.~Gross}, {\em Logarithmic {S}obolev inequalities}, Amer. J. Math., 97
  (1975), pp.~1061--1083.

\bibitem{Gunson91}
{\sc J.~Gunson}, {\em Inequalities (Birmingham, 1987)}, vol.~129 of Lecture
  Notes in Pure and Appl. Math., Dekker, New York, 1991, ch.~Inequalities in
  mathematical physics, pp.~53--79.

\bibitem{778970}
{\sc P.-L. Lions}, {\em The concentration-compactness principle in the calculus
  of variations. {T}he locally compact case. {I}}, Ann. Inst. H. Poincar\'e
  Anal. Non Lin\'eaire, 1 (1984), pp.~109--145.

\bibitem{778974}
\leavevmode\vrule height 2pt depth -1.6pt width 23pt, {\em The
  concentration-compactness principle in the calculus of variations. {T}he
  locally compact case. {II}}, Ann. Inst. H. Poincar\'e Anal. Non Lin\'eaire, 1
  (1984), pp.~223--283.

\bibitem{MR0100158}
{\sc J.~Nash}, {\em Continuity of solutions of parabolic and elliptic
  equations}, Amer. J. Math., 80 (1958), pp.~931--954.

\bibitem{MR0109940}
{\sc L.~Nirenberg}, {\em On elliptic partial differential equations}, Ann.
  Scuola Norm. Sup. Pisa (3), 13 (1959), pp.~115--162.

\bibitem{MR1842429}
{\sc F.~Otto}, {\em The geometry of dissipative evolution equations: the porous
  medium equation}, Comm. Partial Differential Equations, 26 (2001),
  pp.~101--174.

\bibitem{MR1715341}
{\sc P.~Pucci, J.~Serrin, and H.~Zou}, {\em A strong maximum principle and a
  compact support principle for singular elliptic inequalities}, J. Math. Pures
  Appl. (9), 78 (1999), pp.~769--789.

\bibitem{MR768629}
{\sc J.~L. V{\'a}zquez}, {\em A strong maximum principle for some quasilinear
  elliptic equations}, Appl. Math. Optim., 12 (1984), pp.~191--202.

\bibitem{MR2459454}
{\sc C.~Villani}, {\em Optimal transport, old and new}, vol.~338 of Grundlehren
  der Mathematischen Wissenschaften [Fundamental Principles of Mathematical
  Sciences], Springer-Verlag, Berlin, 2009.

\bibitem{MR479373}
{\sc F.~B. Weissler}, {\em Logarithmic {S}obolev inequalities for the
  heat-diffusion semigroup}, Trans. Amer. Math. Soc., 237 (1978), pp.~255--269.

\end{thebibliography}
\affiliationone{
J.~Dolbeault and M.J.~Esteban\\ Ceremade, Universit\'e Paris-Dauphine, Place de Lattre de Tassigny,\newline 75775 Paris C\'edex~16, France
\email{\Email{dolbeaul@ceremade.dauphine.fr}\\ \Email{esteban@ceremade.dauphine.fr}}}
\affiliationtwo{
A.~Laptev\\ Department of Mathematics, Imperial College London, Huxley Building, 180 Queen's Gate, London SW7 2AZ, UK\email{\Email{a.laptev@imperial.ac.uk}}}
\affiliationthree{M.~Loss\\ School of Mathematics, Skiles Building, Georgia Institute of Technology, Atlanta GA 30332-0160, USA
\email{\Email{loss@math.gatech.edu}}}
\end{document}